\newcounter{main}
\newtheorem{theorem}{Theorem}[section]
\newtheorem{proposition}[theorem]{Proposition}
\newtheorem{lemma}[theorem]{Lemma}
\newtheorem{corollary}[theorem]{Corollary}
\newtheorem{definition}{Definition}[section]
\newcommand{\blanksquare}{\,\,\,$\sqcup\!\!\!\!\sqcap$}
\newcounter{example}
{{\stepcounter{example}}{\flushleft {\bf Example \arabic{example}:}}}%
{\par}
\title[The spectrum of infinite dimensional random products]{On the spectrum of infinite dimensional random products of compact operators}
\author{ M\'{a}rio Bessa and Maria Carvalho }
\date{September 21, 2007}
\begin{document}
\maketitle

\begin{abstract}
We consider an infinite dimensional separable Hilbert space and its
family of compact integrable cocycles over a dynamical system $f$.
Assuming that $f$ acts in a compact Hausdorff space $X$ and
preserves a Borel regular ergodic measure which is positive on
non-empty open sets, we conclude that there is a residual subset of
cocycles within which, for almost every $x$, either the
Oseledets-Ruelle's decomposition along the orbit of $x$ is dominated
or has a trivial spectrum.
\end{abstract}

\bigskip

\noindent\emph{MSC 2000:} primary 37H15, 37D08; secondary 47B80.\\
\emph{keywords:} Random operators; dominated splitting; multiplicative ergodic theorem; Lyapunov exponents.\\

\begin{section}{Introduction}
Let $\mathcal{H}$ be an infinite dimensional separable Hilbert space
and $\mathcal{C}(\mathcal{H})$ the set of linear compact operators
acting in $\mathcal{H}$ with the uniform norm given by
$$\| \mathcal{T}\|=\underset{v\neq 0}{\sup} \frac{\| \mathcal{T}(v)\|}{\|v\|}.$$
Consider a homeomorphism $f:X\rightarrow{X}$ of a compact Hausdorff
space $X$ and $\mu$ an $f$-invariant Borel regular measure that is
positive on non-empty open subsets. Given a family $(A(x))_{x \in
X}$ of operators in $\mathcal{C}(\mathcal{H})$ and a continuous
vector bundle $\pi: X \times \mathcal{H} \rightarrow {X}$, we define
the \emph{associated cocycle over} $f$ by
$$
\begin{array}{cccc}
F(A): & X\times{\mathcal{H}} & \longrightarrow & X\times{\mathcal{H}} \\
& (x,v) & \longmapsto & (f(x),A(x)\cdot v).
\end{array}
$$
The map $F$ satisfies the equality $\pi\circ{F}=f\circ{\pi}$ and,
for all $x\in X$, $F_{x}(A):\mathcal{H}\rightarrow{\mathcal{H}}$ is
linear on the fiber $\mathcal{H}:=\pi^{-1}(\{x\})$. For simplicity
of notation we call $A$ a \textit{cocycle}.\\

A \textit{random product} of a cocycle
$A:X\rightarrow\mathcal{C}(\mathcal{H})$ \textit{associated to the
map $f$} is the sequence, indexed by $x\in X$, of linear maps of
$\mathcal{H}$ defined, for each $n \in{\mathbb{N}_{0}}$, by
$A^{0}(x)=Id$ and
$$A^{n}(x)=A(f^{n-1}(x))\circ...\circ A(f(x))\circ A(x).$$ In this paper
we are interested in the asymptotic properties of
random products, that is, the limit of the spectra, as $n$ goes to
$\infty$, of the sequence $(A^{n}(x))_{n \in \mathbb{N}}$, for most
points $x$. In general it is not guaranteed, not even in a relevant
subset of $X$, the convergence of the sequence of operators
$(A^{n}(x))_{n \in \mathbb{N}}$ or of their spectra. But under the
hypothesis that $A$ is integrable, that is,
$$\int_{X}\log^{+}\|A(x)\| \, d\mu(x)<\infty,$$ where
$\log^{+}(y)=\text{max}\,\{0,\log(y)\}$, the theorem of Ruelle
(\cite{Ru}) offers, for $\mu$-almost every point $x\in X$, a nice
description of a complete set of Lyapunov exponents and associated
$A$-invariant directions. The aim of this work is to identify
generic properties of these exponents and corresponding
decomposition.

The approach in Mate's work (\cite{M}), where it is assumed that $A$
is a bounded operator, $f$ is the shift of $N$ symbols and, for
every $x$, the sequence $(A^{n}(x))_{n \in \mathbb{N}}$ converges,
suggests that the null cocycle has a main role in this context: we
may split the Hilbert space into a direct sum of two subspaces, one
that aggregates all the fixed directions and the other corresponding
to the eigenvalue zero (that is, the Lyapunov exponent $-\infty$).
Among compact cocycles this scenario should be improved. In fact,
for these operators the unique point of accumulation of the spectrum
is $0$ and therefore the component of the spectrum that may lie on
the unit circle (inducing non-hyperbolicity) is finite dimensional;
besides if the spectrum is trivial (reduced to one point), then the
compact operator has to be the null one. Nevertheless the success of
Mate's result, which does not depend on perturbations, is strongly
based upon the hypothesis that, for every shift orbit, the sequence
of operators $(A^{n}(x))_{n \in \mathbb{N}}$ converges. Without this
assumption, the best we can expect is an approximate result stating
that, generically, either the above Mate's decomposition reduces to
the null part or is, in some sense, hyperbolic.

The main difficulty, due to the infinite dimensional environment, is
precisely to cancel the spectrum by a small perturbation of the
original system. In the context of families of finite dimensional
linear invertible cocycles, Bochi and Viana (\cite{BV2}) managed to
prove that, by a $C^0$-small perturbation, we may reach a cocycle
exhibiting, for almost every point, uniform hyperbolicity in a
finite projective space or else a one-point spectrum Oseledets'
decomposition. By \textit{hyperbolicity} the authors mean the
existence, for $\mu$-almost every $x\in X$, of an $A(x)$-invariant
decomposition of the fiber $\mathcal{H}$ into a direct sum of two
invariant subspaces $E^{1}_{x}\oplus{E^{2}_{x}}$ which varies
continuously with the point $x$ and enhances a stronger contraction,
or a weaker expansion, by $A$ along the first one. In our setting we
could apply directly this result to a $C^{0}$-approximation of $A$
with finite rank (see \cite{RS}); however this straight application
would endorse a meagre result: it only gives a $C^{0}$-dense
panorama, instead of the aimed $C^{0}$-residual one; besides, in the
case a dominated splitting prevails, this would be a decomposition
of just a finite dimensional subspace of $\mathcal{H}$.

Essentially all we need is a strategy to perturb and therefore to
produce a residual dichotomy; this has to be done without leaving
the world $C_{I}^{0}(X,\mathcal{C}(\mathcal{H}))$ of continuous
compact integrable cocycles and keeping control on the possibly
infinite amount of Lyapunov exponents, most of which may be equal to
$-\infty$. Two key ingredients in the argument of \cite{BV2} can be
adapted to our infinite dimensional context: the upper
semi-continuity of a map that measures how the sum of Lyapunov
exponents behaves; and the extension and continuity of a do\-mi\-na\-ted
splitting. Concerning the first one, we had to accept that now this
map has infinite components and, due to the presence of the Lyapunov
exponent $-\infty$, may take values on the extended real set, which
may prevent integrability. This difficulty is the reason for
assuming that $\mu$ is ergodic and positive on non-empty open sets.
But this is the main difference, in the large its intervention is
the same as in \cite{BV2}. The second one is harder to deal with
because $\mathcal{H}$ is infinite dimensional and $A$, being
compact, is not invertible - and it may even happen that $\inf \, \{
\|A(x)\|: x \in X \}=0$. The notion of dominated splitting must then
be reformulated and applied to Oseledets-Ruelle's splittings where
the stronger space is associated to the first $k \in \mathbb{N}$
finite Lyapunov exponents (whose sum of multiplicities gives the
\emph{index} of the splitting) and the weaker subspace corresponds
to the remaining ones: this way the first subspace is finite
dimensional and there the restriction of $A$ is invertible. Let us
see how we proceed from here.

As $C_{I}^{0}(X,\mathcal{C}(\mathcal{H}))$ is a Baire space and each
$p^{th}$-component of the entropy map is upper semi-continuous (see
Section 2.5), each has a re\-si\-du\-al subset $\mathcal{R}_{p}$ of
continuity points. The set $\cap \mathcal{R}_{p}$ is also residual
and its elements are points of continuity of all of these
map-components. We take one of them, say $A$, and apply to it
Ruelle's theorem (see Section 2.2). As $\mu$ is ergodic, the
Lyapunov exponents of $A(x)$ and co\-rres\-pon\-ding multiplicities are
constant for $\mu$-almost every $x$. Besides, as $\mu$ is positive
on non-empty open subsets, the properties that are valid $\mu$ -
almost everywhere are also dense.

By compactness of the operators, if the Lyapunov exponents of $A$
are all equal, then they must be $-\infty$ and so the limit operator
given by Ruelle's theorem is identically null. Assume now that the
Lyapunov exponents of $A$ are not all equal. The space $X$ can then
be sliced into measurable strata within each of which the
Oseledets-Ruelle's decomposition induces a direct sum
$\mathcal{H}=E_{1} \oplus E_{2}$ where the dimension of $E_{1}$ is
constant, $E_{1}$ is associated to some finite number of the first
finite Lyapunov exponents, and the splitting is dominated. If the
union of these slices has full measure, the proof is complete.
Otherwise, we can find a subset with positive measure where neither
the Oseledets-Ruelle's splitting is dominated nor the Lyapunov
exponents are all equal. This allows us to diminish drastically, by
a small global perturbation, the value of one of the components of
the entropy map, contradicting its continuity at $A$. Accordingly we
establish that:

\begin{theorem}\label{teorema1}
There exists a $C^{0}$-residual subset $\mathcal{R}$ of the set of
integrable compact cocycles
${C_{I}^{0}(X,\mathcal{C}(\mathcal{H}))}$ such that, for
$A\in{\mathcal{R}}$ and $\mu$-almost every $x\in{X}$, either the
limit
$\underset{n\rightarrow{\infty}}{\text{lim}}({A(x)^{*}}^{n}A(x)^{n})^{\frac{1}{2n}}$
is the null operator or the Oseledets-Ruelle's splitting of $A$
along the orbit of $x$ is dominated.
\end{theorem}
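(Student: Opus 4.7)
The plan is to adapt the strategy of Bochi-Viana~\cite{BV2} to the infinite-dimensional compact setting. For each $p\in\mathbb{N}$ define the functional
\[
\Lambda_{p}(A)=\int_{X}\bigl(\lambda_{1}(A,x)+\cdots+\lambda_{p}(A,x)\bigr)\,d\mu(x),
\]
where $\lambda_{1}(A,x)\geq\lambda_{2}(A,x)\geq\cdots$ denotes the Lyapunov spectrum of $A$ at $x$ counted with multiplicity, with values in $[-\infty,\infty)$. Granted the upper semi-continuity of each $\Lambda_{p}$ announced in Section~2.5, together with the Baire property of $C_{I}^{0}(X,\mathcal{C}(\mathcal{H}))$, each $\Lambda_{p}$ admits a residual set $\mathcal{R}_{p}$ of continuity points; I set $\mathcal{R}=\bigcap_{p\geq 1}\mathcal{R}_{p}$, which remains residual.

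Fix $A\in\mathcal{R}$. Ergodicity of $\mu$ combined with Ruelle's theorem produces a $\mu$-conull invariant set on which the Lyapunov exponents and their multiplicities are constant. One branch of the alternative is immediate: if all these exponents coincide, then compactness of $A$ forces the common value to be $-\infty$, whence the limit $(A(x)^{*n}A(x)^{n})^{1/(2n)}$ is identically null for $\mu$-almost every $x$ and the theorem holds. Otherwise, for each index $k$ for which $\lambda_{k}$ is finite and strictly larger than $\lambda_{k+1}$, Oseledets-Ruelle yields a measurable splitting $\mathcal{H}=E_{1}^{k}(x)\oplus E_{2}^{k}(x)$ with $E_{1}^{k}(x)$ finite dimensional and $A$ invertible on it; if for some admissible $k$ this splitting is dominated on a full-measure set (in the reformulated sense discussed in the introduction), the theorem again holds for $A$.

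The remaining task is to exclude the case in which, for every admissible $k$, the splitting $E_{1}^{k}\oplus E_{2}^{k}$ fails to be dominated on a set of positive $\mu$-measure. To this end I would produce, for an arbitrary $\varepsilon>0$, a cocycle $\tilde{A}\in C_{I}^{0}(X,\mathcal{C}(\mathcal{H}))$ with $\|\tilde{A}-A\|<\varepsilon$ and $\Lambda_{k}(\tilde{A})\leq\Lambda_{k}(A)-\delta$ for a uniform $\delta>0$, contradicting continuity of $\Lambda_{k}$ at $A$. In the spirit of~\cite{BV2}, such a $\tilde{A}$ is built by exploiting the lack of domination along a recurrent orbit to interpolate a finite chain of small unitary rotations that progressively swap a direction of $E_{1}^{k}$ with one of $E_{2}^{k}$, contaminating one of the top $k$ exponents with a smaller one and thereby shrinking $\Lambda_{k}$ by a definite amount. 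The main difficulty is the realisation of these perturbations inside $C_{I}^{0}(X,\mathcal{C}(\mathcal{H}))$: each local rotation must be implemented by a compact, integrable, $C^{0}$-continuous correction of $A$, and the aggregate effect on the infinitely many Lyapunov exponents---many of which may sit at $-\infty$---must be controlled so that no exponent outside the top $k$ is spuriously lifted. The crucial leverage is that $E_{1}^{k}$ is finite dimensional and $A$ is invertible there, so the desired rotation can be implanted by a finite-rank correction supported on a short piece of the orbit, keeping $\tilde{A}$ in $C_{I}^{0}(X,\mathcal{C}(\mathcal{H}))$.
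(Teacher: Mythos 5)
Your proposal follows the paper's strategy essentially verbatim: the entropy functionals $\Lambda_p$ coincide (by ergodicity) with the paper's $LE_p$, you extract the residual set $\mathcal{R}=\bigcap_p\mathcal{R}_p$ from upper semi-continuity on the Baire space $C^{0}_{I}(X,\mathcal{C}(\mathcal{H}))$, you resolve the trivial branch via compactness forcing all-equal exponents to $-\infty$, and you exclude the non-dominated case via concatenated small rotations that mix $E_1^k$ with $E_2^k$ to drop $\Lambda_k$ and contradict continuity. The only substantive pieces you elide but the paper treats explicitly are (i) the separate disposal of periodic points, where the Oseledets-Ruelle splitting is shown to be automatically $m$-dominated, and (ii) the two distinct perturbation estimates depending on whether $\lambda_{p+1}$ is finite or $-\infty$ (Lemma~\ref{local perturbation 2}), both of which you acknowledge as ``difficulties'' without carrying them out; modulo those details your route is the same as the paper's.
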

\end{section}

\begin{section}{Preliminary results}

\begin{subsection}{Completeness}
\begin{lemma}\label{Baire property}
$C_{I}^{0}(X,\mathcal{C}(\mathcal{H}))$ is a Baire space.
\end{lemma}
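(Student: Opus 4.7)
The plan is to show that $C_I^0(X,\mathcal{C}(\mathcal{H}))$, equipped with the sup norm $\|A\|_\infty := \sup_{x\in X}\|A(x)\|$, is a complete metric space (in fact a Banach space); the Baire Category Theorem then delivers the conclusion directly.

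The first layer is a standard three-step completeness argument. The space $\mathcal{C}(\mathcal{H})$ of compact operators is a closed subspace of $\mathcal{B}(\mathcal{H})$ under the operator norm, since a uniform limit of compact operators is compact; hence it is itself a Banach space. Given a Cauchy sequence $(A_n)$ in $C^0(X,\mathcal{C}(\mathcal{H}))$, for each $x$ the sequence $(A_n(x))$ is Cauchy in $\mathcal{C}(\mathcal{H})$ and converges to some $A(x)$, the convergence is uniform in $x$, and a uniform limit of continuous maps into a Banach space is continuous. This makes $C^0(X,\mathcal{C}(\mathcal{H}))$ with the sup norm a Banach space.

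The second layer is to verify that the integrability condition $\int_X\log^+\|A(x)\|\,d\mu(x)<\infty$ is automatic in this setting. Because $X$ is compact and $x\mapsto\|A(x)\|$ is continuous, the quantity $\|A\|_\infty$ is finite, so $\log^+\|A(x)\|\le\log^+\|A\|_\infty$ pointwise; since $\mu$ is a finite Borel measure on $X$, this uniform bound is integrable. Consequently $C_I^0(X,\mathcal{C}(\mathcal{H}))=C^0(X,\mathcal{C}(\mathcal{H}))$ as sets, and the former inherits the Banach space structure from the latter.

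The Baire Category Theorem applied to this complete metric space then yields the lemma. There is no serious obstacle; the only point requiring a moment's thought is that the integrability constraint does not carve out a proper, possibly incomplete, subspace of $C^0(X,\mathcal{C}(\mathcal{H}))$, and this dissolves immediately from the compactness of $X$ and finiteness of $\mu$.
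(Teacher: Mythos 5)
Your proof is correct and takes essentially the same route as the paper: establish that $C^0(X,\mathcal{C}(\mathcal{H}))$ with the sup norm is a Banach space, then observe that compactness of $X$, continuity of $x\mapsto\|A(x)\|$, and finiteness of $\mu$ make the integrability condition harmless, and invoke the Baire Category Theorem. Your packaging is marginally cleaner — you note outright that $C_I^0(X,\mathcal{C}(\mathcal{H}))=C^0(X,\mathcal{C}(\mathcal{H}))$ as sets, whereas the paper reaches the same conclusion implicitly by checking that the limit of a Cauchy sequence from $C_I^0$ is again integrable — but the ingredients and the logic are the same.
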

\begin{proof}
Since $\mathcal{H}$ is complete, the space
$\mathcal{C}(\mathcal{H})$, with the uniform norm, is also complete
(see \cite{RS}). The space $C^{0}(X,\mathcal{C}(\mathcal{H}))$ is
endowed with the norm defined by
$$\|A\|=\underset{x \in X} {\max} \, \|A(x)\|$$ and this way it is
complete: if $(A_{n})_{n \in \mathbb{N}}$ is a Cauchy sequence in \linebreak
$C^{0}(X,\mathcal{C}(\mathcal{H}))$ then, for each $x \in X$, the
sequence $(A_{n}(x))_{n \in \mathbb{N}}$ has the Cauchy property and
therefore converges in $\mathcal{C}(\mathcal{H})$. This defines a
limit of $(A_{n})_{n \in \mathbb{N}}$ in
$C^{0}(X,\mathcal{C}(\mathcal{H}))$.

Consider now a Cauchy sequence, say $(B_{n})_{n \in \mathbb{N}}$, of
elements of the subspace $C_{I}^{0}(X,\mathcal{C}(\mathcal{H}))$,
that is, continuous compact cocycles such that, for all $n \in
\mathbb{N}$, $\int_{X}\log^{+}\|B_{n}(x)\| \,d\mu(x)<\infty$. Then:
\begin{itemize}
\item $(B_{n})_{n \in \mathbb{N}}$ converges to some $B \in
C^{0}(X,\mathcal{C}(\mathcal{H}))$.
\item As $X$ is compact and $B$ is continuous, there exists $M > 0$
such that, for all $x \in X$, we have $\|B(x)\| \leq M$; and so $0
\leq \log^{+}\|B(x)\| \leq \log(M)$.
\item As $(\|B_{n}\|)_{n}$ converge uniformly to
$\|B\|$, the same holds for the sequence of $\mu$ - integrable maps
$(\log^{+}\|B_{n}\|)_{n}$, and therefore $\log^{+}(\|B\|)$ is $\mu$
- integrable.
\item Besides
$0\leq \int_{X}\log^{+}\|B(x)\| \,d\mu(x)\leq\log(M)<\infty$.
\end{itemize} \end{proof}
\end{subsection}

\begin{subsection}{The multiplicative ergodic theorem}
The following result gives a spectral decomposition for the limit of
random products of compact cocycles under the previously defined
integrability condition.

\begin{theorem}\label{Ruelle}(Ruelle~\cite{R})
Let $f:X\rightarrow X$ be a homeomorphism and $\mu$ any
$f$-invariant Borel probability. If $A$ belongs to
$C^{0}_{I}(X,\mathcal{C}(\mathcal{H}))$, then, for $\mu$-a.e
$x\in{X}$, we have the following properties:
\begin{enumerate}
\item[(a)] The limit
$\underset{n\rightarrow{\infty}}{\text{lim}}({A(x)^{*}}^{n}A(x)^{n})^{\frac{1}{2n}}$
exists and is a compact operator $\mathcal{L}(x)$, where $A^{*}$
denotes the dual operator of $A$.
\item[(b)] Let
$e^{\lambda_{1}(x)}>e^{\lambda_{2}(x)}>...$ be the nonzero
eigenvalues of $\mathcal{L}(x)$ and $U_{1}(x)$, $U_{2}(x)$, ... the
associated eigenspaces whose dimensions are denoted by $n_{i}(x)$.
The sequence of real functions $\lambda_{i}(x)$, called
\textbf{Lyapunov exponents} of $A$, where $1\leq i(x) \leq j(x)$ and
$j(x)\in \mathbb{N} \cup \{\infty\}$ verifies:
\begin{enumerate}
\item[(b.1)] The functions $\lambda_{i}(x)$, $i(x)$, $j(x)$ and $n_{i}(x)$
are $f$-invariant and depend in a measurable way on $x$.
\item[(b.2)] Let $V_{i}(x)$ be the orthogonal complement of
$U_{1}(x)\oplus{U_{2}(x)}\oplus{...}\oplus{U_{i-1}(x)}$ for
$i<j(x)+1$ and $V_{j(x)+1}(x)=Ker(\mathcal{L}(x))$. Then:
\begin{enumerate}
\item[(i)] $\underset{n\rightarrow{\infty}}{\text{lim}}\frac{1}{n}\log\|A^{n}(x)u\|=\lambda_{i}(x) \text{ if }u\in{V_{i}(x)\setminus V_{i+1}(x)}\\\text{ and } i<j(x)+1$;
\item[(ii)] $\underset{n\rightarrow{\infty}}{\text{lim}}\frac{1}{n}\log\|A^{n}(x)u\|=-\infty\text{ if }u\in{V_{j(x)+1}(x)}$.
\end{enumerate}
\end{enumerate}
\end{enumerate}
\end{theorem}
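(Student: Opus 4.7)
The plan is to follow Ruelle's original strategy, adapting the finite-dimensional Oseledets argument to compact operators by working through the singular value decomposition of $A^n(x)$. For each $n$, since $A^n(x)$ is compact, the positive self-adjoint operator $(A^n(x)^* A^n(x))^{1/2}$ has a discrete spectrum of singular values $s_1^{(n)}(x) \geq s_2^{(n)}(x) \geq \ldots \geq 0$ accumulating only at $0$, with an orthonormal eigenbasis. The main engine will be Kingman's subadditive ergodic theorem applied, for each $k \in \mathbb{N}$, to the sequence
\[
\varphi_k^{(n)}(x) := \log \|\Lambda^k A^n(x)\| = \log \prod_{i=1}^{k} s_i^{(n)}(x),
\]
where $\Lambda^k$ is the $k$-th exterior power. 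Submultiplicativity $\|\Lambda^k(BC)\| \leq \|\Lambda^k B\|\,\|\Lambda^k C\|$ combined with $A^{m+n}(x) = A^n(f^m(x))\, A^m(x)$ gives subadditivity in $n$, while integrability comes from the bound $\log^+\|\Lambda^k A(x)\| \leq k \log^+ \|A(x)\|$ and the hypothesis $\int \log^+\|A\|\,d\mu < \infty$.

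Kingman then produces, for $\mu$-a.e.\ $x$ and each $k \geq 1$, a limit $\chi_k(x) = \lim_{n} \frac{1}{n} \log \|\Lambda^k A^n(x)\|$, and setting $\lambda_k(x) := \chi_k(x) - \chi_{k-1}(x)$ yields a non-increasing sequence of candidate Lyapunov exponents taking values in $[-\infty, +\infty)$. Their $f$-invariance is inherited from the cocycle identity and measurability from the explicit construction. Compactness of $A(x)$ forces $s_i^{(n)}(x) \to 0$ as $i \to \infty$, which makes the $\lambda_i(x)$ either tend to $-\infty$ or become eventually $-\infty$, thereby giving the kernel part of $\mathcal{L}(x)$. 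To upgrade to existence of $\mathcal{L}(x)$ itself, I would prove that the individual singular values obey $s_k^{(n)}(x)^{1/n} \to e^{\lambda_k(x)}$ and that the rank-$k$ spectral projections of $(A^n(x)^* A^n(x))^{1/2n}$ stabilize; this uses a greedy/Gram–Schmidt comparison between successive eigenspaces controlled by the ratios $\varphi_k^{(n)} - \varphi_{k-1}^{(n)}$, together with a gap argument when $\lambda_k(x) > \lambda_{k+1}(x)$.

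Once $\mathcal{L}(x)$ is identified as the operator-norm limit, its non-zero eigenvalues are automatically $e^{\lambda_1(x)} > e^{\lambda_2(x)} > \ldots$ with multiplicities $n_i(x)$ summing the coincident consecutive exponents, and compactness of $\mathcal{L}(x)$ is inherited from being a uniform limit of compact operators. The filtration $V_i(x)$ is then defined from the orthogonal decomposition $\mathcal{H} = \bigoplus_i U_i(x) \oplus \ker \mathcal{L}(x)$, and the growth statements (b.2)(i)–(ii) follow by writing $u \in V_i(x) \setminus V_{i+1}(x)$ in the singular basis of $A^n(x)$: its leading nonzero component along the asymptotic $U_i$-block drives $\frac{1}{n}\log \|A^n(x) u\| \to \lambda_i(x)$, while lower-block contributions are exponentially dominated; vectors in $\ker \mathcal{L}(x)$ are contracted faster than any exponential, yielding $-\infty$.

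The main obstacle is the infinite-dimensional passage from the scalar convergence handed over by Kingman to \emph{operator-norm} convergence of $(A^n(x)^* A^n(x))^{1/2n}$: in finite dimensions the count of singular values is fixed, but here one must simultaneously control an infinite tail of them, all contracting at distinct, possibly $-\infty$, rates, while ruling out that two consecutive singular values approach the same exponential rate in a way that destroys the eigenprojection stability. My strategy would be a two-scale argument: treat the top $k$ singular values with the classical Oseledets/gap technique, and absorb the infinite tail into a compact remainder of small norm using the fact that $\|(A^n(x)^* A^n(x))^{1/2n}\| \leq \|A^n(x)\|^{1/n} \to e^{\lambda_1(x)} < \infty$ together with compactness of each $A^n(x)$; letting $k \to \infty$ then yields a Cauchy sequence of compact operators whose limit is the desired $\mathcal{L}(x)$.
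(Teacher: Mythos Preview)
The paper does not prove this statement: Theorem~\ref{Ruelle} is quoted verbatim as a result of Ruelle (the citation \cite{Ru}) and is used as a black box throughout the rest of the paper. There is therefore no ``paper's own proof'' to compare your proposal against.

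That said, your sketch is a faithful outline of the standard Ruelle/Oseledets argument: subadditivity of $n \mapsto \log\|\Lambda^k A^n(x)\|$ plus Kingman to get the sums $\chi_k$, then differencing to extract the individual exponents, then a gap/eigenprojection argument to pass to the operator limit $\mathcal{L}(x)$. The one place where your outline is genuinely thin is the operator-norm convergence step. You say you would ``absorb the infinite tail into a compact remainder of small norm'' and let $k\to\infty$ to get a Cauchy sequence of compacts; but this is exactly where Ruelle's proof does real work. The issue is not bounding the tail in norm (that is easy from $s_k^{(n)}\to 0$) but controlling the \emph{angles} between the top-$k$ spectral subspaces of $(A^{n*}A^n)^{1/2n}$ as $n$ varies, uniformly enough to conclude convergence of the projections. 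Your greedy/Gram--Schmidt comparison and the gap argument need to be made quantitative: one typically shows that the projection onto the span of the first $k$ singular directions at time $n$ and at time $n+1$ differ by something summable in $n$, using the cocycle relation $A^{n+1}(x)=A(f^n(x))A^n(x)$ and the separation $\lambda_k>\lambda_{k+1}$. Without that estimate the limit $\mathcal{L}(x)$ is not established, only the scalar data $\lambda_i(x)$. If you intend to actually write the proof rather than cite it, that is the step to flesh out.
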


Notice that, as $\mu$ is ergodic, the maps $i(x)$, $j(x)$,
$n_{i}(x)$ and $\lambda_{i}(x)$ are constant $\mu$-almost
everywhere. Besides, as $\mathcal{L}(x)$ is a compact operator, if
its eigenvalues are all equal, then they must be all zero, that is,
the Lyapunov exponents of $A$ at $x$ are all equal to $-\infty$.

In the sequel we will denote by $\mathcal{O}(A)$ the full measure
set of points given by this theorem. Since $\mu$ is positive on
non-empty open subsets, $\mathcal{O}(A)$ is dense in $X$.
\end{subsection}

\begin{subsection}{Dimension}
The infinite dimension of $\mathcal{H}$ brings additional trouble
while dealing with Oseledets' decompositions because in the sequel
we will need one of them with finite codimension. This is the aim of
next lemma.

\begin{lemma}\label{finite dimension}
Let $A$ be an integrable compact operator and $\lambda_{i}(x)$,
$U_{i}(x)$ as in Ruelle's theorem. If $\lambda_{i}(x)\neq -\infty$,
then $U_{i}(x)$ has finite dimension.
\end{lemma}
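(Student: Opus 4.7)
The plan is to reduce the claim to the classical spectral theorem (Riesz-Schauder) for compact self-adjoint operators on a Hilbert space, applied pointwise to $\mathcal{L}(x)$.

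First I would record the structural properties of $\mathcal{L}(x)$ that are available from Theorem~\ref{Ruelle}(a). Compactness is given directly. For each $n \in \mathbb{N}$, the operator $A(x)^{*n}A(x)^{n}$ is positive and self-adjoint, so its unique positive $2n$-th root $\bigl(A(x)^{*n}A(x)^{n}\bigr)^{1/(2n)}$ is itself positive and self-adjoint. Since the sets of self-adjoint and of positive operators are closed in the uniform norm, the limit $\mathcal{L}(x)$ inherits both properties. Hence $\mathcal{L}(x)$ is a positive, self-adjoint, compact operator on $\mathcal{H}$.

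Second, I would invoke the spectral theorem for compact self-adjoint operators: the spectrum of $\mathcal{L}(x)$ consists of at most countably many real eigenvalues whose only possible accumulation point is $0$, and the eigenspace corresponding to each nonzero eigenvalue is finite dimensional. Under the hypothesis $\lambda_{i}(x)\neq -\infty$, the number $e^{\lambda_{i}(x)}$ is strictly positive, so it is a nonzero eigenvalue of $\mathcal{L}(x)$. Therefore the associated eigenspace $U_{i}(x)$ is finite dimensional, as claimed.

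There is essentially no obstacle here beyond checking that the standard hypotheses of Riesz-Schauder apply; the only step worth spelling out is the passage of self-adjointness and positivity from the pre-limit operators $\bigl(A(x)^{*n}A(x)^{n}\bigr)^{1/(2n)}$ to their uniform limit, which is immediate from the continuity of the involution $T \mapsto T^{*}$ and of the cone of positive operators in the operator norm.
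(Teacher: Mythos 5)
Your proof is correct and follows essentially the same route as the paper's: identify $e^{\lambda_i(x)}$ as a nonzero eigenvalue of the compact operator $\mathcal{L}(x)$ and invoke the Riesz--Schauder theory to conclude that the eigenspace is finite dimensional. The extra care you take to establish that $\mathcal{L}(x)$ is positive and self-adjoint is harmless but not actually needed for this particular conclusion, since finite-dimensionality of eigenspaces for nonzero eigenvalues already holds for arbitrary compact operators on a Banach space.
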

\begin{proof}
The numbers $e^{\lambda_{1}(x)}\ > \ e^{\lambda_{2}(x)}\ >...$,
where $\lambda_{k}(x)$ is different from $-\infty$, are the nonzero
eigenvalues of the compact operator $\mathcal{L}(x)$, and
$U_{1}(x)$, $U_{2}(x)$, ... the associated eigenspaces. By
compactness of $\mathcal{L}(x)$, theses spaces have finite
dimensions (see \cite{RS}).
\end{proof}
\end{subsection}

\begin{subsection}{Dominated splittings}
Given $f$ and $A$ as above and an $f$-in\-va\-ri\-ant set $\mathcal{K}$,
we say that a splitting $E_{1}(x)\oplus E_{2}(x)=\mathcal{H}$ is
\textit{$\ell$-dominated} in $\mathcal{K}$ if $A(E_{i}(x)) \subset
E_{i} (f(x))$ for every $x\in \mathcal{K}$, the dimension of
$E_{i}(x)$ is constant in $\mathcal{K}$ for $i=1,2$, and there are
$\theta_{\mathcal{K}}> 0$ and $\ell \in \mathbb{N}$ such that, for
every $x \in \mathcal{K}$ and any pair of unit vectors $u\in
E_{2}(x)$ and $v\in E_{1}(x)$, one has
$$\|A(x)(v)\| \geq \theta_{\mathcal{K}}$$
$$\frac{\|A^{\ell}(x)u\|}{\|A^{\ell}(x)v\|} \leq \frac{1}{2}.$$ This definition corresponds
to hyperbolicity in an infinite dimensional projective space; we will denote it by $E_{1} \succ_{\ell} E_{2}$. \\

The splittings we are interested in are the ones corresponding to
Lyapunov subspaces given by Ruelle's theorem. In this setting:
\begin{definition}
Given an $f$-invariant set $\mathcal{K}$ contained in
$\mathcal{O}(A)$, the Oseledets-Ruelle's decomposition is
$\ell$-dominated in $\mathcal{K}$ if we may detach in it a direct
sum of two subspaces, say $E_{1}(x)\oplus E_{2}(x)=\mathcal{H}$,
such that $E_{1}(x)$ is associated to a finite number of the first
Lyapunov exponents, say $\lambda_{1}, \, \lambda_{2}, \, ... \,
\lambda_{k}$, the subspace $E_{2}(x)$ corresponds to the remaining
ones and $E_{1} \succ_{\ell} E_{2}$.
\end{definition}

The classical concept of domination in the finite dimensional
setting is stronger than this one, requiring a comparison of the
strength of each Oseledets' subspace with the next one. Due to the
possible presence of $-\infty$ in the set of Lyapunov exponents,
this is in general unattainable in our context, unless this exponent
does not turn up.

Besides, for future use of the $\ell$-domination, we require that
the norm of $A$ in $\mathcal{K}$ is bounded away from zero. In fact,
among finite dimensional automorphisms, \textit{domination} implies
that the angle between any two subbundles of the dominated splitting
is uniformly bounded away from zero, a very useful property while
proving that the dominated splitting extends continuously. Due to
the lack of compactness of $\mathcal{O}(A)$ and the fact that we are
dealing with a family $(A(x))_{x}$ of compact operators acting on an
infinite dimensional space - so $A(x)$ is not invertible and its
norm may not be uniformly bounded away from zero - we cannot expect
such a strong statement in our setting, unless we relate, as we have
done in the definition, domination with non-zero norms.

The statement of next lemma ensures that we may check if $x$ in
$\mathcal{O}(A)$ has a dominated Oseledets-Ruelle's decomposition
$\mathcal{H}=E_{1}(x)\oplus E_{2}(x)$, where $E_{1}(x)$ is the
Lyapunov subspace associated to the first $k$ finite Lyapunov
exponents $\lambda_{1} > \lambda_{2}> ... > \lambda_{k}> -\infty$
and $E_{2}(x)$ corresponds to the remaining ones. In what follows we
will address always to this specific Oseledets-Ruelle's splitting.

\begin{lemma}\label{A in E-1 is invertible}
Let $A$ be an integrable compact cocycle acting on an infinite
dimensional Hilbert space $\mathcal{H}$. Consider $x$ in
$\mathcal{O}(A)$, $\lambda_{1}\ > \lambda_{2}> ... > \lambda_{k}$
the first $k$ Lyapunov exponents and $E_{1}(x)=U_{1}(x)\oplus
U_{2}(x)\oplus...\oplus U_{k}(x)$ the corresponding subspace. If
$\lambda_{k}> -\infty$, then the restriction of the operator $A(x):
E_{1}(x) \rightarrow E_{1}(f(x))$ is invertible and $A^{-1}(f(x))$
is compact.
\end{lemma}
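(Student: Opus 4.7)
The plan is to reduce invertibility to an injectivity statement via a dimension count, and then to appeal to Ruelle's theorem to rule out the existence of a nonzero vector in the kernel of $A(x)|_{E_1(x)}$. Since each $\lambda_i \ge \lambda_k > -\infty$ for $i \le k$, Lemma \ref{finite dimension} forces every $U_i(x)$ (and hence $E_1(x)$) to be finite dimensional, with dimension $n_1 + \cdots + n_k$; this dimension is $f$-invariant, so $\dim E_1(f(x)) = \dim E_1(x)$. The invariance $A(x)\,E_1(x) \subset E_1(f(x))$ is supplied by the Oseledets--Ruelle construction in the presence of the spectral gap $\lambda_k > \lambda_{k+1}$. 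Once injectivity is established, the restriction is a bijection between finite dimensional spaces, and its inverse is compact automatically.

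For the injectivity, given $v \in E_1(x) \setminus\{0\}$, write $v = u_1 + \cdots + u_k$ with $u_i \in U_i(x)$ and take $j$ minimal with $u_j \neq 0$. The eigenspaces of the positive self-adjoint operator $\mathcal{L}(x)$ corresponding to distinct eigenvalues are mutually orthogonal, and since $V_i(x) = (U_1(x) \oplus \cdots \oplus U_{i-1}(x))^\perp$, this immediately places $v$ in $V_j(x)$. On the other hand, $v \in V_{j+1}(x)$ would give $0 = \langle v, u_j\rangle = \|u_j\|^2$, a contradiction, so $v \in V_j(x) \setminus V_{j+1}(x)$. Part (b.2)(i) of Ruelle's theorem then yields
$$\lim_{n \to \infty} \frac{1}{n} \log \|A^n(x) v\| = \lambda_j \ge \lambda_k > -\infty,$$
whereas $A(x) v = 0$ would force $A^n(x)v = 0$ for every $n \ge 1$ and so reduce this limit to $-\infty$. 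Hence $A(x) v \neq 0$ for every $v \in E_1(x) \setminus \{0\}$, and injectivity follows.

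The subtle step is the forward invariance $A(x)\,E_1(x) \subset E_1(f(x))$: it does not follow from the forward invariance of the filtration $V_1 \supset V_2 \supset \cdots$ alone, but is enabled by the spectral gap $\lambda_k > \lambda_{k+1}$ built into the hypothesis $\lambda_k > -\infty$, which isolates $E_1$ as the \emph{fast} invariant summand of the splitting rather than a mere orthogonal complement. Everything else, namely the orthogonality of the $U_i$, the dimension count, and compactness of a linear map between finite dimensional spaces, is essentially elementary once Ruelle's theorem is in place.
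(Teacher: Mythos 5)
Your proof takes the same route as the paper's: injectivity of $A(x)|_{E_1(x)}$ via Ruelle's growth estimate, then a dimension count using Lemma \ref{finite dimension} and ergodicity (constancy of $\dim E_1$), and finally automatic compactness of a finite-rank inverse. The two places you go beyond the paper are genuine improvements rather than departures. First, the paper simply asserts that $\lim_n \frac{1}{n}\log\|A^n(x)v\| \geq \lambda_k$ for every nonzero $v \in E_1(x)$; your argument via mutual orthogonality of the eigenspaces $U_i(x)$ of the self-adjoint operator $\mathcal{L}(x)$ and the minimal index $j$ with $u_j \neq 0$ is exactly what is needed to deduce $v \in V_j(x)\setminus V_{j+1}(x)$ before (b.2)(i) of Theorem \ref{Ruelle} can be invoked, and the paper omits it. Second, you are right to flag the forward invariance $A(x)E_1(x) \subset E_1(f(x))$ as the delicate point: Theorem \ref{Ruelle} as stated only yields a forward-invariant \emph{filtration} $V_1 \supset V_2 \supset \cdots$, and the $U_i$ (being eigenspaces of the symmetric limit $\mathcal{L}(x)$) are not literally $A$-invariant, so invariance of $E_1$ rests on the finer two-sided part of Ruelle's theory that produces an invariant fast summand when $f$ is a homeomorphism and the top block of exponents is finite; the paper's proof uses this silently. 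In short, same strategy, but your write-up makes explicit two steps the paper treats as obvious.
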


\begin{proof}
Let us first check that this restriction of $A(x)$ is injective.
Consider $v\neq 0$ in $E_{1}(x)$. Then, by Ruelle's theorem, one has
$$\underset{n\rightarrow{\infty}}{\text{lim}}\frac{1}{n}\log\|A^{n}(x)v \| \geq
\lambda_{k}> -\infty,$$ so $A(x)v$ cannot be zero. Now, by Lemma 2.3
and since $\mu$ is ergodic, the dimension of $E_{1}(x)$ is finite
and constant in $\mathcal{O}(A)$. Therefore the map $A(x): E_{1}(x)
\rightarrow E_{1}(f(x))$ is an injective linear function between
spaces of equal finite dimension, and so it is surjective. The
compactness of the inverse of $A$, given at each point by a finite
dimensional matrix, now follows.\end{proof}

We need now to verify that domination is easily inherited by
neighbors.

\begin{proposition}\label{extension to the closure of the domination}
If the Oseledets-Ruelle's splitting $E_{1}(x) \oplus
E_{2}(x)=\mathcal{H}$ is $\ell$-dominated over an invariant set
$\mathcal{K} \subset \mathcal{O}(A)$, it may be extended
continuously to an $\ell$-dominated splitting over the closure of
$\mathcal{K}$.
\end{proposition}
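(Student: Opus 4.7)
The plan is to fix $x \in \overline{\mathcal{K}}$ and a sequence $(x_n) \subset \mathcal{K}$ with $x_n \to x$, then construct $E_1(x)$ and $E_2(x)$ as strong limits of the splittings at $x_n$, and verify that $A$-invariance, the lower bound on $E_1$, and the ratio inequality all survive the passage to the closure by continuity of $A$ and $f$. Uniqueness of the limit splitting, and hence its independence of the approximating sequence and continuous dependence on $x$, follows from the fact that an $\ell$-dominated splitting of fixed index is uniquely pinned down by the forward dynamics.

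The construction of $E_1(x)$ is a finite-dimensional compactness argument powered by the compactness of $A$. Pick an orthonormal basis $\{v_n^1, \ldots, v_n^k\}$ of the $k$-dimensional subspace $E_1(x_n)$, with $k$ constant by Lemma \ref{finite dimension} and ergodicity. Since $\mathcal{K}$ is $f$-invariant, Lemma \ref{A in E-1 is invertible} lets me write $v_n^i = A(f^{-1}(x_n))\, u_n^i$ for some $u_n^i \in E_1(f^{-1}(x_n))$, and the lower bound in the domination gives $\|u_n^i\| \leq 1/\theta_\mathcal{K}$. The decomposition
$$v_n^i = A(f^{-1}(x))\, u_n^i + \bigl(A(f^{-1}(x_n)) - A(f^{-1}(x))\bigr)\, u_n^i$$
has a second term vanishing in norm by continuity of $A \circ f^{-1}$, and a first term lying in the relatively compact set $A(f^{-1}(x))(\overline{B}_{1/\theta_\mathcal{K}})$ by compactness of $A(f^{-1}(x))$. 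A diagonal extraction in $i$ produces a subsequence along which $v_n^i \to v^i$ strongly; continuity of the inner product preserves orthonormality, so $E_1(x) := \mathrm{span}\{v^1, \ldots, v^k\}$ is $k$-dimensional.

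For $E_2(x)$ I first derive a uniform angle bound. For unit $v_1 \in E_1(x_n)$ and $v_2 \in E_2(x_n)$, iterated domination yields $\|A^\ell(x_n) v_1\| \geq \theta_\mathcal{K}^\ell$ and $\|A^\ell(x_n) v_2\| \leq \tfrac{1}{2}\|A^\ell(x_n) v_1\|$, so $\|A^\ell(x_n)(v_1 \pm v_2)\| \geq \tfrac{1}{2}\theta_\mathcal{K}^\ell$; since $\|A^\ell(x_n)\| \leq \|A\|^\ell$, this forces $\|v_1 \pm v_2\| \geq \theta_\mathcal{K}^\ell/(2\|A\|^\ell)$, which a standard Hilbert-space computation converts into a uniform bound $\|P_n\| \leq C$ on the oblique projection $P_n : \mathcal{H} \to E_1(x_n)$ along $E_2(x_n)$. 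For any $v \in \mathcal{H}$, expanding $P_n v$ in the orthonormal basis $\{v_n^i\}$ keeps the coefficients bounded by $C\|v\|$, hence convergent along a subsequence; a diagonal argument over a countable dense subset of $\mathcal{H}$ then delivers a strong-operator-topology limit $P$ which is idempotent (since $P_n^2 = P_n$) with range $E_1(x)$ (since $P_n v^i \to v^i$). The intertwining $A(x_n) P_n = P_n^{f(x_n)} A(x_n)$ passes to the limit, so $E_2(x) := \ker P$ is a closed $A(x)$-invariant complement of $E_1(x)$.

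What remains is routine passage to the limit: unit vectors in $E_i(x)$ are approximated by normalizations of $P_n v$ and $(I - P_n) u$, which lie in $E_i(x_n)$ and converge in norm to the originals, so the lower bound $\|A v\| \geq \theta_\mathcal{K}$ and the ratio inequality transfer by continuity of $A$. The main obstacle is the infinite-dimensional factor $E_2$: the Grassmannian of closed subspaces of $\mathcal{H}$ is not compact, so the limit of $E_2(x_n)$ cannot be extracted by abstract nonsense. The technical crux of the argument is therefore converting the finite-dimensional compactness available for $E_1$ (via Lemma \ref{A in E-1 is invertible} and compactness of $A$) into a uniform bound on the oblique projections $P_n$ through the angle estimate, and then upgrading to strong-operator-topology convergence via separability of $\mathcal{H}$.
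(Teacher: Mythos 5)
Your proposal is correct and follows the same high-level skeleton as the paper's proof: pull back the $E_1$-basis through the invertible restriction of $A$ (Lemma~\ref{A in E-1 is invertible}), exploit compactness of $A$ and the lower bound $\theta_{\mathcal K}$ to extract a convergent subsequence, derive a uniform angle/transversality estimate from the $\ell$-domination, pass the inequalities to the limit, and invoke uniqueness of the limit splitting to get independence of the approximating sequence and continuity. Where you diverge genuinely is in the construction of $E_2(x)$. The paper defines $E_2(x)$ directly as the set of accumulation points of sequences $u_n\in E_{2,n}(x_n)$ and then proves in separate lemmas that this set is a closed complement and that $\succ_\ell$ transfers. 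You instead convert the angle bound into a uniform norm bound on the oblique projections $P_n$ onto $E_1(x_n)$ along $E_2(x_n)$, pass to a strong-operator-topology limit $P$ using separability (noting that the uniform bound makes $P$ idempotent and intertwining), and set $E_2(x)=\ker P$. Your route is more operator-theoretic and makes closedness and complementarity automatic from $P=P^2$, and it isolates precisely why the $E_1$-extraction is legitimate despite the fibers varying (by splitting $A(f^{-1}(x_n))u_n^i$ into a compact part at $x$ plus an error controlled by continuity of $A$), a point the paper handles somewhat tersely. You also sidestep the paper's discussion of whether the limit $w_i$'s could become linearly dependent by choosing \emph{orthonormal} bases $v_n^i$, whose strong limits are again orthonormal. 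The paper's accumulation-point definition is conceptually lighter and avoids operator topologies, but your version is more explicit about the mode of convergence of the subbundles, which is exactly the content of the continuity assertion in Corollary~\ref{continuity of the subbundles}.
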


\begin{proof} To extend $E_{1}$ we will take advantage from the fact that,
for each $z \in \mathcal{K}$, the subspace $E_{1}(z)$ is an
Oseledets-Ruelle's space associated to a finite number of the first
finite Lyapunov exponents; and, moreover, that $A$ is compact. The
definition of $E_{2}$ at the closure is suggested by our need to
extend the relation $ \succ_{\ell}$, which is easier if the vectors
at the extension are just accumulation points of sequences of
vectors from where the $\ell$-domination holds.

\begin{lemma}\label{angle}
The angle between $E_{1}(z)$ and $E_{2}(z)$, where $z$ belongs to
$\mathcal{K}$, is uniformly bounded away from zero (say bigger than
a constant $\gamma \in \, ]0,\frac{\pi}{2}]$).
\end{lemma}

\begin{proof}
Assume that there are sequences ${(x_{n})}_{n \in \mathbb{N}}$ in
$\mathcal{K}$, ${(u_{n})}_{n \in \mathbb{N}}$ in $E_{2}(x_{n})$ and
${(v_{n})}_{n \in \mathbb{N}}$ in $E_{1}(x_{n})$ such that, for all
$n$, we have $\|u_{n}\|=\|v_{n}\|=1$ and $u_{n} - v_{n}$ converges
to $0$. As $A$ is continuous, if $n$ is large enough then the norm
$\|A^{\ell}(x_{n})(u_{n}) - A^{\ell}(x_{n})(v_{n})\|$ is arbitrarily
small; moreover
$$\|A^{\ell}(x_{n})(u_{n}) - A^{\ell}(x_{n})(v_{n})\| \geq \|A^{\ell}(x_{n})(u_{n})\| - \|A^{\ell}(x_{n})(v_{n})\|$$ and the last difference is equal to
$$\|A^{\ell}(x_{n})(v_{n})\|\left(\frac{\|A^{\ell}(x_{n})(u_{n})\|}{\|A^{\ell}(x_{n})(v_{n})\|}
- 1\right).$$ As there is $\theta_{\mathcal{K}}$, independent of
$x_{n}$ and $v_{n}$, such that $\|A(x_{n})(v_{n})\| \geq
\theta_{\mathcal{K}}$, we have $\|A^{\ell}(x_{n})(v_{n})\| \geq
\theta_{\mathcal{K}}^{\ell}$ and so
$$\frac{\|A^{\ell}(x_{n})(u_{n})\|}{\|A^{\ell}(x_{n})(v_{n})\|} - 1 \approx 0. $$ But
this contradicts the fact that $E_{1}(x_{n}) \succ_{\ell}
E_{2}(x_{n})$ for all $n$.
\end{proof}

Finally consider a sequence ${(x_{n})}_{n \in \mathbb{N}}$ of
elements of $\mathcal{K}$ converging to $x \in X$ and suppose that
along ${(x_{n})}_{n \in \mathbb{N}}$ we may find an $\ell$-dominated
splitting $E_{1,n}\oplus E_{2,n}$ made of Oseledets-Ruelle's
subspaces as mentioned. Recall that $E_{1,n}$ has dimension $p$ for
all $n$ and corresponds to the Lyapunov exponents $\lambda_{1} >
\lambda_{2}
> ... > \lambda_{k} > -\infty$ and that we must extend this dominated splitting to $x$.

Take, for each $n$, a unitary basis $v_{1,n}, ..., v_{p,n}$ of
$E_{1,n}$. As proved above, in Lemma 2.4, since $\lambda_{k}>
-\infty$, the restriction of the operator $A(x): E_{1}(x)
\rightarrow E_{1}(f(x))$ is invertible and $A^{-1}(f(x))$ is
compact. Therefore, for each $i=1,...,p$, the sequence
$(A(x_{n})^{-1}v_{i,n})_{n \in \mathbb{N}}$ has a subsequence
convergent to $h_{i} \in \mathcal{H}$. Apply now the operator $A$ to
obtain $p$ vectors
$w_{i}$ in the fiber at $x$.\\

\textit{Claim: Each $w_{i} \neq 0$}\\

In fact, by continuity of the operator $A$, $w_{i}$ is the limit of
a sub\-sequen\-ce of $(v_{i,n})_{n \in \mathbb{N}}$ and these vectors
satisfy the condition
$$\underset{m\rightarrow{\infty}}{\lim}\frac{1}{m}\log \|A^{m}(x_{n})v_{i,n}\|
\geq \lambda_{k}> -\infty$$ which implies that, if $m$ is big
enough, $\|A^{m}(x)w_{i}\| \geq \exp(m \times \lambda_{k})$; this
prevents $w_{i}$ from being the vector zero.\\

We now define $E_{1}(x)$ as the space spanned by ${w_{1}, ...,
w_{p}}$ and $E_{2}(x)$ as the set of accumulation points, when $n$
goes to $\infty$, of all sequences of vectors in $E_{2,n}(x_{n})$,
where $(x_{n})_{n \in \mathbb{N}}$ is any sequence converging to
$x$. Notice that, as $A$ is continuous and the spaces
$E_{i,n}(x_{n})$ are determined by Lyapunov exponents, we have
$A(E_{i}(x)) \subset E_{i}(f(x))$.\\

The vectors $w_{i}$ could be linearly dependent, so the dimension of
$E_{1}(x)$ might be less than $p$. However, as verified above, the
angle between $E_{1}(z)$ and $E_{2}(z)$, where $z$ belongs to
$\mathcal{K}$, is uniformly bounded away from zero; this prevents
directions of $E_{1}(z)$ from mixing with those of $E_{2}(z)$, the
way $E_{1}$ had to loose dimension. Therefore the dimension of
$E_{1}$ is constant and equal to $p$ in the closure of
$\mathcal{K}$. And, as we will verify, forms with $E_{2}$ a direct
sum in $\mathcal{H}$ which is a dominated splitting.

\begin{lemma}\label{subspace and direct sum}\item[]
\begin{enumerate}
\item[(a)] $E_{2}(x)$ is a subspace of $\mathcal{H}$.
\item[(b)] $\mathcal{H}=E_{1}(x) \oplus E_{2}(x)$.
\end{enumerate}
\end{lemma}
\begin{proof}
(a) The vector zero is in $E_{2}(x)$ as limit of the null sequence
of vectors of the subspaces $E_{2,n}(x_{n})$. Consider now a scalar
$\eta$ and two non-zero vectors $u_{0}$ and $u_{1}$ of $E_{2}(x)$.
By definition of $E_{2}(x)$, there are sequences $(u_{0,n})_{n \in
\mathbb{N}}$ and $(u_{1,n})_{n \in \mathbb{N}}$ of
$(E_{2,n}(x_{n}))_{n \in \mathbb{N}}$ such that $u_{0}$ is the limit
of the former and $u_{1}$ is the limit of the latter. Then, for each
$n$, the sum $u_{0,n}+u_{1,n}$ and the product $\eta \, u_{0,n}$ are
in the subspace $E_{2,n}(x_{n})$ and converge to $u_{0}+u_{1}$ and
$\eta \, u_{0}$ respectively.\\

\noindent(b) For each $n \in \mathbb{N}$, we have
$\mathcal{H}=E_{1,n}(x_{n}) \oplus E_{2,n}(x_{n})$; therefore, given
$h \in \mathcal{H}$, there are vectors $e_{1,n} \in E_{1,n}(x_{n})$
and $e_{2,n} \in E_{2,n}(x_{n})$ such that $h=e_{1,n}+e_{2,n}$. As
verified in the previous Lemma, the sequence $(e_{1,n})_{n \in
\mathbb{N}}$ has a convergent subsequence to a vector $e \in
E_{1}(x)$. Then the corresponding subsequence of $(e_{2,n})_{n \in
\mathbb{N}}$ converges to $h-e$, which accordingly belongs to
$E_{2}(x)$. Then $h=e+(h-e)$ is in $E_{1}(x)+E_{2}(x)$.

Moreover, if $E_{1}(x) \cap E_{2}(x) \neq \{0\}$, then there is a
vector $u \neq 0$ which is the limit of a sequence $(u_{1,n})_{n \in
\mathbb{N}}$ of vectors in $E_{1,n}(x_{n})$ and also the limit of a
sequence $(u_{2,n})_{n \in \mathbb{N}}$ inside $E_{2,n}(x_{n})$. But
this implies that the angle between these subspaces must be
arbitrarily close to zero as $n$ goes to $\infty$, which contradicts
what was proved above. Therefore $\mathcal{H}=E_{1}(x) \oplus
E_{2}(x)$.
\end{proof}

\begin{lemma}\label{domination}
$E_{1}(x) \succ_{\ell} E_{2}(x)$.
\end{lemma}
\begin{proof}
We must check that the $\ell$-domination of the splitting at $x_{n}$
is inherited by this choice of spaces at $x$. Fix $u \in E_{2}(x)$
and $v \in E_{1}(x)\setminus \{0\}$.

We know that $v$ is a linear combination of ${w_{1}, ..., w_{p}}$
and so it is the limit of a sequence of vectors of $E_{1,n}$, say
$(v_{n})_{n \in \mathbb{N}}$. Therefore $A^{\ell}(x)v \neq 0$
because, for $m$ big enough, the iterates $A^{m}(x)v$ inherit at
least the minimum rate of growing of $w_{i}$, that is, $\exp
(\lambda_{k})$. Besides, as $\|A(x_{n})v_{n}\| \geq
\theta_{\mathcal{K}}$ for all $n$, the same inequality holds in the
limit as $n$ goes to $\infty$, that is, $\|A(x)v\| \geq
\theta_{\mathcal{K}}$.

By definition of $E_{2}(x)$, there is a sequence in
$E_{2,n}(x_{n})$, say $(u_{n})_{n \in \mathbb{N}}$, that converges
to $u$. Since we have, for all $n$, $E_{1,n}(x_{n}) \succ_{\ell}
E_{2,n}(x_{n})$, taking limits on the inequality
$$\frac{\|A^{\ell}(x_{n})u_{n}\|}{\|A^{\ell}(x_{n})v_{n}\|} \leq
\frac{1}{2},$$ we get $$\frac{\|A^{\ell}(x)u\|}{\|A^{\ell}(x)v\|}
\leq \frac{1}{2}.$$ \\
We emphasize that, if $x \notin \mathcal{K}$, we do not known
whether $E_{1}(x)$ and $E_{2}(x)$ are Oseledets-Ruelle's subspaces,
since we cannot guarantee that $x \in \mathcal{O}(A)$.
\end{proof}

\begin{corollary}\label{continuity of the subbundles}
The subbundle $E_{i}(x)$, for $i=1,2$, is well defined and continuous.
\end{corollary}
\begin{proof}
By definition, $E_{2}(x)$ does not depend on the sequence
$(x_{n})_{n \in \mathbb{N}}$ of elements of $\mathcal{K}$ converging
to $x$. Concerning $E_{1}(x)$:

(i) If $x \notin \mathcal{K}$, the $\ell$-domination ensures that
$E_{1}(x)$ is unique since its dimension is fixed: the iterates of
its unit vectors grow faster than those of any unit vector in
$E_{2}(x)$.

(ii) If $x \in \mathcal{K}$, these two subspaces coincide with the
Oseledets-Ruelle's spaces already assigned to $x$. In fact, while
constructing $E_{1}(x)$, we must consider the constant sequence
equal to $x$; the Oseledets-Ruelle's subspace at $x$ corresponding
to the Lyapunov exponents\linebreak $\lambda_{1}, \, \lambda_{2}, \, ...
\,\lambda_{k}$ is then contained in $E_{1}(x)$ and has the same
dimension, so these two spaces coincide.

Uniqueness implies that these spaces vary continuously.
\end{proof}

This ends the proof of Proposition 2.5.
\end{proof}
\end{subsection}

\begin{subsection}{Upper semi-continuity of the entropy function}\label{four}
Given a bounded linear operator
$A:\mathcal{H}\rightarrow{\mathcal{H}}$ and a positive integer $p$,
let $\wedge^{p}(\mathcal{H})$ be the $p^{th}$ exterior power of
$\mathcal{H}$, that is, the infinite dimensional space generated by
$p$ vectors of the form $e_{1}\wedge{e_{2}\wedge...\wedge{e_{p}}}$
with $e_{i} \in \mathcal{H}$. The operator $A$ induces another one
on this space defined by
$$\wedge^{p}(A)(e_{1}\wedge{e_{2}\wedge...\wedge{e_{p}}})=A(e_{1})\wedge{A(e_{2})\wedge...\wedge{A(e_{p})}}.$$
The space $\wedge^{p}(\mathcal{H})$ has endowed the inner product
such that\linebreak $\|e_{1}\wedge{e_{2}\wedge...\wedge{e_{p}}}\|$ is the
$p^{th}$-dimensional volume of the parallelepiped spanned by $e_{1},
e_{2}, ..., e_{p}$. The cocycle $\wedge^{p}(A)$ is continuous with
respect to the associated norm. For details see \cite{T}, chapter V.

\begin{lemma}\label{compactness and integrability}
If $A$ is compact and integrable, then $\wedge^{p}(A)$ also is.
\end{lemma}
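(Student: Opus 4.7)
The plan is to split Lemma 2.10 into its two assertions: for each $x \in X$, compactness of $\wedge^{p}(A(x))$ as an operator on $\wedge^{p}(\mathcal{H})$, and the integrability inequality $\int_{X} \log^{+}\|\wedge^{p}(A(x))\| \, d\mu(x) < \infty$. The bridge between them is the operator-norm bound $\|\wedge^{p}(T)\| \leq \|T\|^{p}$, valid for every bounded $T$ on $\mathcal{H}$.

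First I would establish this bound. On a decomposable element $e_{1} \wedge \cdots \wedge e_{p}$, the value $\wedge^{p}(T)(e_{1} \wedge \cdots \wedge e_{p}) = T(e_{1}) \wedge \cdots \wedge T(e_{p})$ has norm equal to the $p$-volume of the parallelepiped spanned by $T(e_{1}), \ldots, T(e_{p})$; by Hadamard's inequality this is at most $\|T(e_{1})\| \cdots \|T(e_{p})\| \leq \|T\|^{p} \, \|e_{1}\| \cdots \|e_{p}\|$, which in turn bounds $\|T\|^{p} \, \|e_{1} \wedge \cdots \wedge e_{p}\|$. Extending by linearity to the span of simple wedges and passing to the closure gives $\|\wedge^{p}(T)\| \leq \|T\|^{p}$. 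Integrability of $\wedge^{p}(A)$ then follows at once from $\log^{+}\|\wedge^{p}(A(x))\| \leq p \, \log^{+}\|A(x)\|$ and the hypothesis that $A$ is integrable.

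For compactness of $\wedge^{p}(A(x))$, I would use the standard criterion that an operator on a Hilbert space is compact precisely when it is a uniform limit of finite-rank operators. Pick finite-rank operators $T_{n}$ with $\|T_{n} - A(x)\| \to 0$. If the image of $T_{n}$ lies in a finite-dimensional subspace $F_{n}$, then $\wedge^{p}(T_{n})$ takes values in the finite-dimensional space $\wedge^{p}(F_{n})$, hence has finite rank. To conclude, I would verify that $T \mapsto \wedge^{p}(T)$ is Lipschitz in the operator norm on norm-bounded sets: writing the difference $\wedge^{p}(T) - \wedge^{p}(S)$ on a decomposable $e_{1} \wedge \cdots \wedge e_{p}$ as the telescoping sum of $p$ terms of the form
$$T(e_{1}) \wedge \cdots \wedge T(e_{i-1}) \wedge (T-S)(e_{i}) \wedge S(e_{i+1}) \wedge \cdots \wedge S(e_{p})$$
and applying the Hadamard bound to each factor yields
$$\|\wedge^{p}(T) - \wedge^{p}(S)\| \leq p \, \max(\|T\|, \|S\|)^{p-1} \, \|T - S\|.$$
Hence $\wedge^{p}(T_{n}) \to \wedge^{p}(A(x))$ in norm, and $\wedge^{p}(A(x))$ is compact.

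The main obstacle is the Lipschitz estimate for $\wedge^{p}$; the argument is elementary but requires a careful telescoping of the multilinear expression and consistent use of the Hadamard bound at each step. Everything else follows directly from the definition of the norm on $\wedge^{p}(\mathcal{H})$ as a $p$-dimensional volume.
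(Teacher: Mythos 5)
Your integrability step matches the paper's, both resting on $\|\wedge^{p}(T)\| \leq \|T\|^{p}$ together with $\log^{+}(t^{p}) = p\,\log^{+}t$. For compactness, however, you take a genuinely different route. The paper argues sequentially: it takes a bounded sequence $(y_{n})_{n}$ in $\wedge^{p}\mathcal{H}$, writes each $y_{n}$ as a decomposable $v_{1}^{n}\wedge\cdots\wedge v_{p}^{n}$, extracts convergent subsequences of $(A(v_{j}^{n}))_{n}$ using compactness of $A$, and recombines. You instead approximate $A(x)$ in operator norm by finite-rank $T_{n}$, observe that $\wedge^{p}(T_{n})$ has range inside the finite-dimensional space $\wedge^{p}(F_{n})$, and use a Lipschitz estimate for $T \mapsto \wedge^{p}(T)$ on bounded sets to conclude $\wedge^{p}(T_{n}) \to \wedge^{p}(A(x))$ in norm. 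Your route is actually more robust: not every element of $\wedge^{p}\mathcal{H}$ is decomposable, and even a decomposable element of norm one need not have a representation with uniformly bounded factors (take $n e_{1} \wedge \tfrac{1}{n}e_{2}$), so the paper's ``boundedness of the $v_{j}^{n}$'' step is really only a sketch, while the finite-rank approximation sidesteps both issues entirely. One point to tighten in your own write-up: bounding $\wedge^{p}(T) - \wedge^{p}(S)$ termwise on orthonormal decomposables and ``extending by linearity and passing to the closure'' does not by itself deliver the operator-norm Lipschitz bound, because a general element of $\wedge^{p}\mathcal{H}$ is a possibly infinite $\ell^{2}$-combination of such basis wedges and a termwise estimate degrades to an $\ell^{1}$ bound. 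The clean fix is to regard $\wedge^{p}\mathcal{H}$ as the antisymmetric subspace of the Hilbert tensor power $\mathcal{H}^{\otimes p}$, use $\|T_{1}\otimes\cdots\otimes T_{p}\| = \prod_{i}\|T_{i}\|$, telescope $T^{\otimes p} - S^{\otimes p} = \sum_{i=1}^{p} T^{\otimes(i-1)}\otimes(T-S)\otimes S^{\otimes(p-i)}$ in operator norm on $\mathcal{H}^{\otimes p}$, and then restrict to $\wedge^{p}\mathcal{H}$; the same device also yields $\|\wedge^{p}(T)\| \leq \|T\|^{p}$ rigorously.
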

\begin{proof}
Fix $p$ and $A$ and take any bounded sequence $(y_{n})_{n}$ of
elements of $\wedge^{p}\mathcal{H}$; we must prove that there exists
a subsequence $(y_{n_{k}})_{k}$ such that
$(\wedge^{p}(A)(y_{n_{k}}))_{k}$ converges. For each
$n\in\mathbb{N}$, let
$y_{n}={v_{1}}^{n}\wedge{...}\wedge{v_{p}}^{n}$, with
${v_{j}}^{n}\in\mathcal{H}$ for all $j=1,...,p$. Hence
$\wedge^{p}(A)(y_{n})=A({v_{1}}^{n})\wedge{...}\wedge
A({v_{p}^{n}})$. As $(y_{n})_{n}$ is bounded, for each $j=1,...,p$
the sequence $({v_{i}}^{n})_{n}$ is bounded in $\mathcal{H}$.
Therefore, since $A$ is compact, for all $j=1,...,p$ the sequence
$A({v_{j}^{n}})_{n}$ admits a subsequence convergent to
$u_{j}\in{\mathcal{H}}$. That is, there are subsets of $\mathbb{N}$,
say $\mathbb{N}_{1}$, $\mathbb{N}_{2}$, ..., $\mathbb{N}_{p}$ such
that
\begin{enumerate}
\item $\mathbb{N}_{j} \supseteq \mathbb{N}_{j+1}$ for all $j$
\item $A({v_{j}^{n}})_{n\in\mathbb{N}_{j}}$ converges to
$u_{j}$.
\end{enumerate}
Therefore the subsequence $(y_{n})_{n\in\mathbb{N}_{p}}$ converges
to $u_{1}\wedge{...}\wedge u_{p}$.

According to the definition of the inner product in
$\wedge^{p}(\mathcal{H})$, for all $x$ we have
$$\|\wedge^{p}(A)(x)\| \leq \|A(x)\|^{p}$$ and so, as $A$ is
integrable,
\begin{eqnarray*}
\int_{X}\log^{+}\|\wedge^{p}(A)(x)\| \,d\mu(x) &\leq&
\int_{X}\log^{+}\|A(x)\|^{p} \,d\mu(x) \\
&=& p \, \int_{X}\log^{+}\|A(x)\| \,d\mu(x) < \infty.
\end{eqnarray*}
\end{proof}

Since $\wedge^{p}(A)$ is compact and integrable we can apply to it
Theorem~\ref{Ruelle} and conclude that, for $\mu$-a.e. $x$,
$$\underset{n\rightarrow{+\infty}}{\text{lim}}\frac{1}{n}\log \| {(\wedge^{p}A)}^{n}(x)\|=\lambda_{1}^{\wedge^{p}}(x).$$
This is the largest Lyapunov exponent given by the dynamics of the
operator $\wedge^{p}(A)$ at $x$. Moreover, for $\mu$-a.e. $x$, we
have
$$\lambda_{1}^{\wedge^{p}}(x)=\sum_{i=1}^{p}\lambda_{i}(x)$$
and
$$\lambda_{1}^{\wedge^{p}}(x)=\underset{n\rightarrow{+\infty}}{\text{lim}}\frac{1}{n}\log
\|\wedge^{p}(A^{n}(x))\|$$ (see~\cite{A}). In fact, as $\mu$ is
ergodic, this equality reduces, $\mu$-a.e. $x$, to
$$\lambda_{1}^{\wedge^{p}}=\sum_{i=1}^{p}\lambda_{i}=\underset{n\rightarrow{+\infty}}{\text{lim}}\frac{1}{n}\log
\|\wedge^{p}(A^{n}(x))\|.$$

Given $p\in{\mathbb{N}}$ define the $p^{th}$\emph{-entropy function}
by
$$
\begin{array}{cccc}
LE_{p}: & C_{I}^{0}(X,\mathcal{C}(\mathcal{H})) & \longrightarrow & \mathbb{R}\cup \{-\infty\} \\
& A &\longmapsto& \sum_{i=1}^{p}\lambda_{i}(A).
\end{array}
$$
As the Lyapunov exponents vary in a measurable way, there is no
reason to expect the function $LE_{p}$ to be continuous. However, as
$\mu$ is ergodic and positive on non-empty open sets, this function
is upper semi-continuous. Let us see why.

\begin{proposition}\label{upper semi-continuity}
Consider a cocycle $A$ and the sequence given, for each $n \in
\mathbb{N}$, by $a_{n}=\log \|\wedge^{p}(A^{n})\|$. Then
\begin{enumerate}
\item[(i)] $\lambda_{1}^{\wedge^{p}}=\underset{n\rightarrow{+\infty}}{\text{lim}}\frac{a_{n}}{n}$.
\item[(2i)] $(a_{n})_{n \, \in \, \mathbb{N}}$ is sub-additive.
\item[(3i)] $\underset{n\rightarrow{+\infty}}{\text{lim}} \, \frac{a_{n}}{n}=\underset{n \in \mathbb{N}}
{\text{inf}} \, \frac{a_{n}}{n}.$
\item[(4i)] For each $n \in \mathbb{N}$, the map $A \longrightarrow a_{n}(A)$ is continuous.
\end{enumerate}
\end{proposition}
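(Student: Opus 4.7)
The plan is to verify the four items in sequence, with (i) being the only genuinely subtle one and the remaining three being fairly standard.

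For (2i) I would invoke the cocycle identity $A^{n+m}(x)=A^{n}(f^{m}(x))\circ A^{m}(x)$ together with the functoriality of the exterior power, $\wedge^{p}(ST)=\wedge^{p}(S)\wedge^{p}(T)$. Taking operator norms yields the pointwise bound $\|\wedge^{p}(A^{n+m}(x))\|\le \|\wedge^{p}(A^{n}(f^{m}(x)))\|\cdot\|\wedge^{p}(A^{m}(x))\|$; since $X$ is compact and the relevant maps are continuous, passing to the maximum over $x$ and then to logarithms gives $a_{n+m}\le a_{n}+a_{m}$. Part (3i) is then Fekete's lemma applied to the subadditive sequence $(a_{n})_{n}$: the limit $\lim a_{n}/n$ exists in $[-\infty,+\infty)$ and equals $\inf_{n} a_{n}/n$.

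For (4i) I would combine two routine continuity facts. For fixed $n$, $A^{n}(x)=A(f^{n-1}x)\circ\cdots\circ A(x)$, so when $\|A-B\|<\varepsilon$ with $\|A\|,\|B\|\le M$, a telescoping estimate gives $\|A^{n}-B^{n}\|\le nM^{n-1}\varepsilon$. The exterior power $\wedge^{p}:\mathcal{C}(\mathcal{H})\to\mathcal{C}(\wedge^{p}\mathcal{H})$ is itself locally Lipschitz (its modulus of continuity is polynomial in the operator norms), so $A\mapsto \wedge^{p}(A^{n})$ is continuous in the uniform topology; composing with $\max_{x\in X}\|\cdot\|$ on the compact base and with $\log$ yields the desired continuity of $a_{n}(A)$.

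The main obstacle is (i). Here I would apply Kingman's subadditive ergodic theorem to the subadditive cocycle $\varphi_{n}(x)=\log\|\wedge^{p}(A^{n}(x))\|$, whose integrability is supplied by Lemma~2.8. This gives, for $\mu$-a.e.\ $x$,
$$\lambda_{1}^{\wedge^{p}}=\lim_{n\to\infty}\tfrac{1}{n}\log\|\wedge^{p}(A^{n}(x))\|=\inf_{n}\tfrac{1}{n}\int_{X}\log\|\wedge^{p}(A^{n}(y))\|\,d\mu(y).$$
Bounding the integrand pointwise by $a_{n}$ immediately yields $\lambda_{1}^{\wedge^{p}}\le \inf_{n} a_{n}/n=\lim a_{n}/n$. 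For the reverse inequality I would use that $y\mapsto \log\|\wedge^{p}(A^{n}(y))\|$ is continuous on the compact $X$ so that its supremum $a_{n}$ is attained, and that $\mathcal{O}(A)$ is dense (because $\mu$ is ergodic and positive on non-empty open sets); one may therefore realise $a_{n}$ as a limit of values along points of $\mathcal{O}(A)$, at which the Ruelle limit is exactly $\lambda_{1}^{\wedge^{p}}$, and passing to the limit in $n$ together with sub\-additivity gives $\lim a_{n}/n\le \lambda_{1}^{\wedge^{p}}$. This is precisely where both the ergodicity and the full-support hypothesis on $\mu$ are essential; without them, $\lim a_{n}/n$ could strictly exceed the a.e.\ Lyapunov exponent.
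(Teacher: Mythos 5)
Your argument follows essentially the same strategy as the paper for all four parts: (2i) via the cocycle identity and functoriality of $\wedge^{p}$, (3i) via Fekete, (4i) via continuity of the norm, and (i) by comparing the global maximum $a_{n}=\sup_{x}\log\|\wedge^{p}(A^{n}(x))\|$ (attained at some $t_{n}$ by compactness of $X$) with values at nearby points in the dense full-measure set $\mathcal{O}(\wedge^{p}A)$, which is where ergodicity and full support enter. The one genuine organizational difference is in (i): you invoke Kingman's subadditive ergodic theorem to get $\lambda_{1}^{\wedge^{p}}=\inf_{n}\frac{1}{n}\int\varphi_{n}\,d\mu\le\inf_{n}a_{n}/n$ as one clean inequality, whereas the paper avoids citing Kingman and instead splits into three cases according to the sign of $\lambda_{1}^{\wedge^{p}}$ (nonnegative, finite negative, $-\infty$) and obtains both inequalities at once by quantitative control of $\|\wedge^{p}(A^{n}(t_{n}))-\wedge^{p}(A^{n}(z_{n}))\|$ in each case. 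Your Kingman route sidesteps that case analysis, which is a small simplification. Be aware, however, that your reverse inequality $\lim a_{n}/n\le\lambda_{1}^{\wedge^{p}}$ is stated as ``realise $a_{n}$ as a limit of values along points of $\mathcal{O}(A)$\dots and pass to the limit in $n$''; since the point $z_{n}\in\mathcal{O}(A)$ approximating $t_{n}$ changes with $n$, the pointwise Ruelle limit is not automatically uniform and the passage to the limit needs an Egorov-type argument (choose $z_{n}$ from a set of measure $>1-\delta$ on which the convergence $\frac{1}{k}\varphi_{k}\to\lambda_{1}^{\wedge^{p}}$ is uniform; such a set is still dense by full support). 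The paper's ``diagonal argument'' glosses over exactly the same point, so your proposal is on par with the paper's, but you should flag that this step is not purely formal.
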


\begin{proof}
Concerning (i):\\

\emph{Case 1: $\lambda_{1}^{\wedge^{p}} \geq 0$}\\

We know that, for $\mu$ - a.e. $x$,
$$\lambda_{1}^{\wedge^{p}}=\underset{n\rightarrow{+\infty}}{\text{lim}}\frac{1}{n}\log
\|\wedge^{p}(A^{n}(x))\|,$$ so, for each such a $x$ and $n$ big
enough, the norm $\|\wedge^{p}(A^{n}(x))\|$ is approximately
$e^{\lambda_{1}^{\wedge^{p}}n}$, and therefore does not vanish.
Besides, by definition,
$$\frac{1}{n}\log \|\wedge^{p}(A^{n})\|= \frac{1}{n}\log \|\wedge^{p}(A^{n}(t_{n}))\|$$ for a suitable choice
of $t_{n} \in X$. As $\mu$ is positive on non-empty open sets, the
subset $\mathcal{O}(\wedge^{p}(A))$ is dense in $X$ and so, for each
$n$, there exists $z_{n} \in \mathcal{O}(\wedge^{p}(A))$ such that
the distance between $t_{n}$ and $z_{n}$ is sufficiently small in
order to guarantee that
$$\|\wedge^{p}(A^{n}(t_{n})) - \wedge^{p}(A^{n}(z_{n}))\| \approx
0.$$ Besides, as for all $n$ we have

$$\underset{k\rightarrow{+\infty}}{\text{lim}}\frac{1}{k}\log \|\wedge^{p}(A^{k})(z_{n})\|
=\lambda_{1}^{\wedge^{p}}$$ by a diagonal argument we deduce that

$$\underset{n\rightarrow{+\infty}}{\text{lim}}\frac{1}{n}\log \|\wedge^{p}(A^{n})(z_{n})\|=
\lambda_{1}^{\wedge^{p}}.$$ And so, if $n$ is big enough, $\|\wedge^{p}(A^{n}(z_{n}))\| \geq
 e^{\lambda_{1}^{\wedge^{p}}} \geq 1$, and therefore
$$\frac{\|\wedge^{p}(A^{n}(t_{n}))\|}{\|\wedge^{p}(A^{n}(z_{n}))\|} \approx
1.$$ Then
$$\underset{n\rightarrow{+\infty}}{\text{lim}}\frac{1}{n}\log \|\wedge^{p}(A^{n})(t_{n})\|=
\underset{n\rightarrow{+\infty}}{\text{lim}}\frac{1}{n}\log
\frac{\|\wedge^{p}(A^{n}(t_{n}))\|}{\|\wedge^{p}(A^{n}(z_{n}))\|} \,
. \, \|\wedge^{p}(A^{n}(z_{n}))\|$$ which reduces to
$$\underset{n\rightarrow{+\infty}}{\text{lim}}\frac{1}{n}\log \|\wedge^{p}(A^{n}(t_{n}))\|=
\underset{n\rightarrow{+\infty}}{\text{lim}}\frac{1}{n}\log
\|\wedge^{p}(A^{n}(z_{n}))\|= \lambda_{1}^{\wedge^{p}},$$ and means
that
$$\lambda_{1}^{\wedge^{p}}=\underset{n\rightarrow{+\infty}}{\text{lim}} \, \frac{a_{n}}{n}.$$\\

\emph{Case 2: $-\infty < \lambda_{1}^{\wedge^{p}} < 0$}\\

As in the previous case, for all $n$ and a suitable choice of
$t_{n}$, we have
$$\frac{1}{n}\log \|\wedge^{p}(A^{n})\|= \frac{1}{n}\log \|\wedge^{p}(A^{n}(t_{n}))\|;$$ moreover, for each $n$,
there exists $z_{n} \in \mathcal{O}(\wedge^{p}(A))$ such that the
distance between $t_{n}$ and $z_{n}$ is sufficiently small in order
to guarantee that
$$\|\wedge^{p}(A^{n}(t_{n})) - \wedge^{p}(A^{n}(z_{n}))\| \leq e^{\lambda_{1}^{\wedge^{p}}n}.$$
Besides, as $\lambda_{1}^{\wedge^{p}} < 0$, if $n$ is big enough,
$$\|\wedge^{p}(A^{n})(z_{n})\| < 1.$$
Therefore
$$\frac{1}{n}\log \|\wedge^{p}(A^{n}(t_{n}))\| \leq \frac{1}{n}\log
\left(\|\wedge^{p}(A^{n})(z_{n})\| +
e^{\lambda_{1}^{\wedge^{p}}n}\right) \leq \frac{1}{n}\log \left(1 +
e^{\lambda_{1}^{\wedge^{p}}n} \right)$$ and so
$$\underset{n\rightarrow{+\infty}}{\text{lim}}\frac{1}{n}\log
\|\wedge^{p}(A^{n})(t_{n})\|= \lambda_{1}^{\wedge^{p}}.$$ \\

\emph{Case 3: $\lambda_{1}^{\wedge^{p}}= -\infty$}\\

Given $\epsilon > 0$, consider, as above, sequences $(t_{n})_{n \in
\mathbb{N}}$ and $(z_{n})_{n \in \mathbb{N}}$ such that
$$\frac{1}{n}\log \|\wedge^{p}(A^{n})\|=\frac{1}{n}\log \|\wedge^{p}(A^{n}(t_{n}))\|$$
and, if $n$ is big enough,
$$\|\wedge^{p}(A^{n}(t_{n})) - \wedge^{p}(A^{n}(z_{n}))\| \leq e^{-\epsilon
n}$$ and
$$\|\wedge^{p}(A^{n}(z_{n}))\| \leq e^{-\epsilon n}.$$ Then
$$\frac{1}{n}\log \|\wedge^{p}(A^{n}(t_{n}))\| \leq \frac{1}{n}\log \left(2 e^{-\epsilon n}\right)$$
and therefore, for all $\epsilon > 0$,
$$\underset{n\rightarrow{+\infty}}{\text{lim}}\frac{1}{n}\log
\|\wedge^{p}(A^{n})(t_{n})\| \leq -\epsilon$$ which implies that
$$\underset{n\rightarrow{+\infty}}{\text{lim}}\frac{1}{n}\log
\|\wedge^{p}(A^{n})(t_{n})\| = -\infty.$$ \\

(2i) This assertion is the same as the one in Section 2.1.3 of
(see~\cite{BV2}) since the extra value $-\infty$ the sequence may
take does not bring any additional difficulty.\\

(3i) This is a direct consequence of (2i).\\

(4i) For each fixed $n$, the continuity of the map $A \longmapsto
a_{n}(A)$ is ensured by the continuity, with the operator, of the
norm $\|\wedge^{p}(A^{n})\|$.\end{proof}

\begin{corollary}
For all $p$ the function $LE_{p}$ is upper semi-continuous.
\end{corollary}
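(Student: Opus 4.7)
The plan is to derive upper semi-continuity of $LE_p$ from the infimum representation assembled in Proposition~\ref{upper semi-continuity}. Recall that, for every cocycle $A \in C_{I}^{0}(X,\mathcal{C}(\mathcal{H}))$,
$$LE_p(A) \;=\; \lambda_{1}^{\wedge^{p}}(A) \;=\; \lim_{n\to\infty}\frac{a_{n}(A)}{n} \;=\; \inf_{n\in\mathbb{N}}\frac{a_{n}(A)}{n},$$
by parts (i) and (3i) of Proposition~\ref{upper semi-continuity}; and by part (4i) each map $A\mapsto a_{n}(A)/n$ is continuous. Since the infimum of a family of continuous functions is always upper semi-continuous (with values in $\mathbb{R}\cup\{-\infty\}$), the conclusion is immediate. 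There is no real obstacle; the only care needed is to treat the case $LE_p(A)=-\infty$ separately, to keep the inequalities legitimate in the extended real line.

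More concretely, fix $A_{0}\in C_{I}^{0}(X,\mathcal{C}(\mathcal{H}))$. I would argue in two cases. If $LE_p(A_{0})\in\mathbb{R}$, then for arbitrary $\epsilon>0$, by the infimum characterisation there exists $N\in\mathbb{N}$ with $\frac{a_{N}(A_{0})}{N}<LE_p(A_{0})+\frac{\epsilon}{2}$. Continuity of $a_N$ yields a neighbourhood $\mathcal{U}$ of $A_{0}$ such that $\frac{a_{N}(A)}{N}<\frac{a_{N}(A_{0})}{N}+\frac{\epsilon}{2}$ whenever $A\in\mathcal{U}$. Since $LE_p(A)\leq \frac{a_{N}(A)}{N}$, we conclude $LE_p(A)<LE_p(A_{0})+\epsilon$ on $\mathcal{U}$.

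If $LE_p(A_{0})=-\infty$, then for each $M>0$, part (3i) produces $N$ with $\frac{a_{N}(A_{0})}{N}<-M-1$; continuity of $a_N$ then furnishes a neighbourhood $\mathcal{U}$ of $A_{0}$ where $\frac{a_{N}(A)}{N}<-M$, hence $LE_p(A)<-M$ throughout $\mathcal{U}$. In both cases we obtain upper semi-continuity of $LE_p$ at $A_0$, which proves the corollary.
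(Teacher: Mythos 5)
Your proof is correct and follows the same route as the paper: $LE_p$ is the infimum of the continuous functions $a_n/n$ (via parts (i), (3i), (4i) of Proposition~\ref{upper semi-continuity}), and an infimum of continuous functions is upper semi-continuous. The paper cites this last fact to Rudin, whereas you spell out the $\epsilon$-argument (including the $-\infty$ case) explicitly, but the substance is identical.
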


\begin{proof}
$LE_{p}$ is the infimum of a sequence of continuous functions with
values on the extended real line, and so it is upper semi-continuous
(see \cite{R}).
\end{proof}

\end{subsection}
\end{section}

\begin{section}{Perturbation lemmas}\label{five}
Let us see how, using the absence of domination, to perform
appropriate $C^{0}$-perturbations of our original system to increase
the number of contractive directions.

\begin{lemma}\label{rotation}
Let $A\in C_{I}^{0}(X,\mathcal{C}(\mathcal{H}))$, $x\in X$ and
$\epsilon>0$. For any 2-dimensional subspace $E\subset \mathcal{H}$,
we may find $\xi_{0}>0$ (not depending on $x$) such that, for all
$\xi \in \, ]0,\xi_{0}[$ there exists a measurable integrable
cocycle $B_{\xi}$ such that,
\begin{enumerate}
\item[(a)] $B_{\xi}(x)\cdot u=A(x)\cdot u$, $\forall u\in{E^{\perp}}$;
\item[(b)] $B_{\xi}(x)\cdot u=A(x)\cdot R_{\xi}\cdot u$, $\forall u\in{E}$, where $R_{\xi}$ is the rotation of angle $\xi$ in $E$;
\item[(c)] $\|A-B_{\xi}\|\leq{\epsilon}$.
\end{enumerate}
\end{lemma}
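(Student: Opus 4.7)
The plan is to localize the perturbation to the single point $x$. Since conditions (a) and (b) only prescribe the value of $B_\xi$ at $x$, and (c) is a uniform norm bound on $X$, the natural attempt is to put $B_\xi(y):=A(y)$ for every $y\neq x$ and define $B_\xi(x)$ so as to enforce (a) and (b).

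More precisely, I would first extend the planar rotation $R_\xi$ of $E$ to a unitary operator $\tilde R_\xi$ on all of $\mathcal{H}$ by letting it act as the identity on $E^\perp$. Then $A(x)\circ\tilde R_\xi$ is compact as a composition of a compact and a bounded operator; it restricts to $A(x)\circ R_\xi$ on $E$ and to $A(x)$ on $E^\perp$. Declaring $B_\xi(x):=A(x)\circ\tilde R_\xi$ therefore settles both (a) and (b). Measurability is clear because $B_\xi$ differs from the continuous map $A$ only at one point, and the inequality $\|B_\xi(y)\|\leq\|A(y)\|$ for every $y$ (rotations are isometries of $\mathcal{H}$) carries integrability across from $A$ to $B_\xi$.

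The core of the proof is the norm estimate in (c). Because $X$ is compact and $A$ is continuous, the constant $M:=\max_{y\in X}\|A(y)\|$ is finite and, crucially, does not depend on $x$. Since $B_\xi$ agrees with $A$ off the point $x$, it suffices to control
$$\|A(x)-B_\xi(x)\|=\|A(x)\circ(I-\tilde R_\xi)\|\leq \|A(x)\|\cdot\|I-\tilde R_\xi\|.$$
Now $I-\tilde R_\xi$ vanishes on $E^\perp$ and coincides with $I_E-R_\xi$ on $E$, so its operator norm equals $2\sin(\xi/2)\leq \xi$. Hence $\|A-B_\xi\|\leq M\xi$, and taking $\xi_0:=\epsilon/M$ yields (c) for every $\xi\in\,]0,\xi_0[$, with $\xi_0$ depending only on $A$ and $\epsilon$ but not on $x$.

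There is essentially no serious obstacle: the hypotheses constrain $B_\xi$ only at the single point $x$, so a pointwise modification is enough, and the uniform bound $M$ over the compact space $X$ is precisely what permits $\xi_0$ to be independent of $x$. The only worth-emphasizing observation is that extending $R_\xi$ as the identity on $E^\perp$ preserves compactness of $B_\xi(x)$ and reduces the norm bound to the elementary inequality $\|I_E-R_\xi\|\leq\xi$.
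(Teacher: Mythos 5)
Your argument is correct and is essentially the paper's proof: both perturb $A$ only at the single point $x$ by post-composing $A(x)$ with the rotation $R_\xi$ extended as the identity on $E^\perp$, and both obtain a uniform $\xi_0$ from the bound $\|I-\tilde R_\xi\|$ together with the compactness-derived constant $M=\max_{y\in X}\|A(y)\|$. The only minor point the paper handles explicitly that you leave implicit is the degenerate case $A\equiv 0$ (so that $M=0$ and $\xi_0=\epsilon/M$ is undefined); there one simply takes $B_\xi=A$.
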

\begin{proof}
If $A=0$, choose $B_{\xi}=A$. Otherwise, consider the direct sum
$\mathcal{H}=E^{\perp}\oplus E$ and denote by
$$R_{\theta}=\begin{pmatrix}
cos(\theta) & -sin(\theta)\\
sin(\theta) & cos(\theta)\\
\end{pmatrix}$$
the matrix of the rotation of angle $\theta$ in an orthonormal basis
of $E$. Then take $\xi_{0}>0$ such that
$\|Id-R_{\xi_{0}}\|\leq{\frac{\epsilon}{\|A\|}}$ and, for each
$v=v_{1}+v_{2}$, where $v_{1} \in E^{\perp}$ and $v_{2} \in E$,
define the perturbation cocycle by

$$B_{\xi}(y)\cdot v= \left\{\begin{array}{ccc}
A(y)\cdot v \,\,\,\,\,\,\,\,\,\,\,\,\,\,\,\,\,\,\,\,\,\,\,\,\,\,\,\,\,\,\,\,\,\,\,\,\,\,\,\,\,\,\,\,\,\,\,\,\text{if } y\not={x} \\
A(x)\cdot v_{1}+A(x)\cdot{R_{\xi}\cdot v_{2}} \,\,\,\,\text{if }
y={x}
\end{array}\right\}.$$ Clearly this cocycle verifies the properties (a), (b) and (c) and
the lemma is proved.
\end{proof}

The following result tells how to interchange directions. The main
idea, coming from Proposition 7.1 of ~\cite{BV2}, is to use the
absence of hyperbolic behavior to concatenate several small
rotations of the form given by Lemma~\ref{rotation}.

\begin{proposition}\label{mixing directions}
Consider a cocycle $A$, $\delta>0$ and $x\in X$ a non-periodic
point endowed with a splitting $\mathcal{H}=E_{1}(x)\oplus E_{2}(x)$ such
that the restriction of $A(x)$ to $E_{1}(x)$ is invertible and, for
some $m(\delta, A)=m\in\mathbb{N}$ large enough, we have
$$\frac{\|A^{m}(x)\vert_{E_{2}}\|}{\|(A^{-1}\vert_{E_{1}})^{m}(x)\|}\geq{1/2}.$$
Then, for each $j=0,...,m-1$, there exists an integrable compact
operator
$$L_{j}:\mathcal{H}\rightarrow\mathcal{H},$$
with $\|L_{j}-A(f^{j}(x))\|<\delta$ and such that $L_{m-1}\circ
...\circ L_{0}(v)=w$ for some nonzero vectors $v\in{E_{1}}$ and $w\in
A^{m}(x)(E_{2})$.
\end{proposition}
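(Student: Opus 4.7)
My plan is to follow the strategy of Proposition 7.1 of \cite{BV2}: concatenate small-angle rotations of the type supplied by Lemma~\ref{rotation} along the orbit segment $x, f(x), \dots, f^{m-1}(x)$, so that their joint action tilts a carefully chosen vector $v \in E_1$ onto the line $A^m(x)(E_2)$. First I would pick unit vectors $v \in E_1(x)$ and $u \in E_2(x)$ nearly realizing the extremal norms implicit in the hypothesis: $\|A^m(x) v\|$ close to the minimum expansion $1/\|(A^{-1}|_{E_1})^m(x)\|$ of $A^m$ on $E_1$, and $\|A^m(x) u\|$ close to the maximum expansion $\|A^m(x)|_{E_2}\|$. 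The hypothesis then gives $\|A^m(x) u\|/\|A^m(x) v\| \geq 1/3$. Define $v_j := A^j(x) v/\|A^j(x) v\|$ (which is well defined because $A|_{E_1}$ is invertible along the orbit by Lemma~\ref{A in E-1 is invertible}), $u_j := A^j(x) u/\|A^j(x) u\|$, and $P_j := \mathrm{span}(v_j, u_j)$.

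Next, for each $j$, Lemma~\ref{rotation} applied at the fiber $f^j(x)$ with the plane $P_j$ provides an operator $L_j$ that coincides with $A(f^j(x))$ on $P_j^{\perp}$ and equals $A(f^j(x)) \circ R_{\xi_j}$ on $P_j$, for a small angle $\xi_j$ to be chosen. Then $\|L_j - A(f^j(x))\| \leq \|A(f^j(x))\|\,\xi_j$, each $L_j$ is compact (composition of a compact operator with a unitary), and since $x$ is non-periodic, the points $f^j(x)$ are distinct, so the fiberwise perturbations do not interfere. The heart of the argument is to choose the $\xi_j$'s so that $L_{m-1}\circ\cdots\circ L_0(v)$ lies on the line spanned by $A^m(x) u$. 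Since $T_j := A(f^j(x))$ maps $P_j$ into $P_{j+1}$, the perturbed iterates stay in these planes, and the evolution of the projective direction is tracked on $\mathbb{P}(P_j)$. In an orthonormal basis of $P_j$ with first vector $v_j$, the projective coordinate $t_j$ (tangent of the angle from $v_j$) satisfies a Möbius recursion $t_{j+1} = \tau_j\,(t_j + \tan\xi_j)/(1 - t_j\tan\xi_j)$, with $\tau_j := \|T_j u_j\|/\|T_j v_j\|$ (up to a bounded correction from possible non-orthogonality of $v_j, u_j$). Alignment with $u_m$ corresponds to $t_m = \infty$, starting from $t_0 = 0$.

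The hypothesis $\prod_{j=0}^{m-1}\tau_j = \|A^m(x) u\|/\|A^m(x) v\| \geq 1/3$ ensures that the cumulative amplification of the rotations is bounded below, so one can distribute positive angles $\xi_j$ with total sum of order $1$ to make $t_{m-1}$ large, and a final compensating rotation with $\tan\xi_{m-1} = 1/t_{m-1}$ then sends $t_m$ to infinity; the composition $L_{m-1}\circ\cdots\circ L_0(v)$ becomes a nonzero scalar multiple $w$ of $A^m(x) u \in A^m(x)(E_2)$, as required. Taking $m = m(\delta,A)$ large enough allows each individual $\xi_j$ to satisfy $\|A(f^j(x))\|\,\xi_j < \delta$, so $\|L_j - A(f^j(x))\| < \delta$ for every $j$. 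The main obstacle is precisely this final balancing act: the angles must simultaneously be small on every fiber (against the possibly large and unbounded $\|A(f^j(x))\|$) and yet their weighted sum must suffice to drive the projective trajectory all the way from $t_0 = 0$ to $t_m = \infty$. The freedom to enlarge $m$ absorbs this tension, exactly as in \cite{BV2}; the compactness of $A$ creates no new difficulty because the whole construction takes place inside two-dimensional subspaces.
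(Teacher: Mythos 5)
The paper provides no proof of this proposition beyond the one-sentence remark that the idea is to concatenate small rotations of the type supplied by Lemma~\ref{rotation}, following Proposition 7.1 of \cite{BV2}; your proposal elaborates exactly this scheme: choosing near-extremal unit vectors $v\in E_1$, $u\in E_2$, propagating the two-plane $P_j=\mathrm{span}(A^j v, A^j u)$ along the orbit, and using the M\"obius recursion on the projective coordinate, which is the engine of Bochi--Viana's Lemma 6.6. This matches the paper's intended approach. (One small remark: you translate the hypothesis into $\|A^m(x)u\|/\|A^m(x)v\|\geq 1/3$, which is the correct lack-of-domination reading used by the paper in (ND), even though the displayed formula in the proposition statement literally has $\|(A^{-1}|_{E_1})^m\|$ in the denominator rather than in the numerator; your interpretation is the consistent one.)
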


We now want to apply this strategy of perturbation to the set of
points where domination fails.

\begin{definition} Let $\Lambda_{p}(A,m)$ be the set of points $x$ such that,
along the orbit of $x$ we have an Oseledets-Ruelle's decomposition of
index $p$ which is $m$-dominated. Denote by
$\Gamma_{p}(A,m)=X\setminus \Lambda_{p}(A,m)$ and by
$\Gamma_{p}^{*}(A,m)$ the set of points in
$\mathcal{O}(A)\cap\Gamma_{p}(A,m)$ which are non-periodic and
verify $\lambda_{p}>\lambda_{p+1}$.
\end{definition}

As checked previously, the set $\Lambda_{p}(A,m)$ is closed and so
all the just mentioned subsets are measurable. Notice also that if
$x$ belongs to $\Gamma_{p}^{*}(A,m)$ for some $m$, then the
$m$-domination on $\mathcal{K}=\{\text {orbit of} \, \,x\}$ of the
Oseledets-Ruelle's splitting may fail by two (possibly coexisting)
events:

\begin{itemize}
\item[(NB)] The norm of the operator $A$ restricted to $E_{1}$ takes values arbitrarily
small along the orbit of $x$.

That is, for all $\theta > 0$ there are $N=N_{\theta, x} \in
\mathbb{N}$ and a unit vector $v_{N} \in E_{1}(f^{N}(x))$ such that
$$\|A(f^{N}(x))(v_{N})\| < \theta.$$ We call ${\Gamma_{p,1}^{*}}$ the set of
points $x \in \Gamma_{p}^{*}(A,m)$ where this happens.

\item[(ND)] The dynamics on the subspace $E_{1}$ does not $m$-dominate the one on $E_{2}$.

This means that there are $n \in \mathbb{N}$ and unit vectors $v_{n}
\in E_{1}(f^{n}(x))$ and $u_{n}\in E_{2}(f^{n}(x))$ such that
$$\frac{\|A^{m}(f^{n}(x))u_{n}\|}{\|A^{m}(f^{n}(x))v_{n}\|} \geq \frac{1}{2}.$$

The points $x \in \Gamma_{p}^{*}(A,m)$ where property (ND) is valid
but not (NB) will be denoted by ${\Gamma_{p,2}^{*}}$.
\end{itemize}

We proceed explaining how we can perform locally and globally a
blending of specific directions of the Oseledets-Ruelle's splitting
for points inside ${\Gamma_{p,1}^{*}} \cup {\Gamma_{p,2}^{*}}$.
Given a point $x$ in ${\Gamma_{p,1}^{*}} \cup {\Gamma_{p,2}^{*}}$,
the aim of next lemmas is to perturb locally $A$, along the orbit of
$x$, in order to carry out an abrupt decay of the norm of the
$p^{th}$-exterior power product $\wedge^{p}$.

\begin{subsection}{Perturbation (p, NB)}

Consider a point $x$ in ${\Gamma_{p,1}^{*}}$. To reduce the norm of
$\wedge^{p}$ we will just replace $A$ by the null operator at the
$n^{th}$-iterate of $x$ along the direction inside $E_{1}(x)$
restricted to which the norm of $A$ is very close to zero.

\begin{lemma}\label{local perturbation 1} Given $\epsilon>0$, there exists
a measurable function \linebreak
$\mathcal{N}:\Gamma_{p,1}^{*}\rightarrow{\mathbb{N}}$ such that, for
$\mu$-almost every $x\in\Gamma_{p,1}^{*}$, every $n\geq
\mathcal{N}(x)$ and each $j=0,...,n$, there exists an integrable
compact operator $L_{j}:\mathcal{H}\rightarrow\mathcal{H}$
satisfying
$$\|L_{j}-A(f^{j}(x))\|<\epsilon$$ and
$$\|\wedge^{p}(L_{n-1}\circ ...\circ L_{0})\|=0.$$
\end{lemma}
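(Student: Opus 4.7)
The plan is to exploit property (NB) defining $\Gamma_{p,1}^*$ to find an iterate $N$ along the orbit of $x$ at which $A$ is arbitrarily contracting on some direction of $E_1(f^N(x))$, and to perform a rank-one modification of $A$ at that fibre which annihilates this direction, collapsing the top $p$-volume of the finite composition.

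Given $\epsilon>0$, I fix $\theta\in(0,\epsilon)$. By (NB), for $\mu$-almost every $x\in\Gamma_{p,1}^*$ there exist a least integer $N=N(\theta,x)\in\mathbb N$ and a unit vector $v_N\in E_1(f^N(x))$ such that $\|A(f^N(x))v_N\|<\theta$; the map $x\mapsto N(\theta,x)$ is measurable because the Oseledets-Ruelle subbundle $x\mapsto E_1(x)$ and the restricted norm are, so I may set $\mathcal{N}(x):=N(\theta,x)+1$, ensuring that for every $n\geq\mathcal{N}(x)$ the chosen index satisfies $N\in\{0,\dots,n-1\}$.

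For such $n$ and $j\in\{0,\dots,n-1\}$ I define $L_j:=A(f^j(x))$ for $j\neq N$, and
\[
L_N:=A(f^N(x))\circ(\mathrm{Id}-P_{v_N}),
\]
where $P_{v_N}$ is the orthogonal projection onto $\mathrm{span}(v_N)$. Then $L_N$ is compact (a rank-one modification of a compact operator), integrability is preserved since the cocycle is altered only at the single fibre $f^N(x)$, and
\[
\|L_N-A(f^N(x))\|=\|A(f^N(x))\,P_{v_N}\|=\|A(f^N(x))v_N\|<\theta<\epsilon.
\]
Lemma~\ref{A in E-1 is invertible} provides a unique $u\in E_1(x)$ with $A^N(x)u=v_N$, so $(L_{n-1}\circ\cdots\circ L_0)(u)=A^{n-1-N}(f^{N+1}(x))\,L_N(v_N)=0$: the composition annihilates at least one direction in $E_1(x)$.

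The main obstacle is to upgrade this single-direction annihilation to the full identity $\|\wedge^p(L_{n-1}\circ\cdots\circ L_0)\|=0$, which for a compact operator amounts to reducing the rank of the composition below $p$. I expect to combine the functorial identity $\wedge^p(L_{n-1}\circ\cdots\circ L_0)=\wedge^p(L_{n-1})\circ\cdots\circ\wedge^p(L_0)$ with successive applications of (NB) at $p$ distinct iterates $N_1<\cdots<N_p\leq n-1$, producing $p$ pullback directions in $E_1(x)$ that span it and are all sent to zero by the composition, while controlling the $E_2$-induced contributions via the rank-one nature of each modification together with the fact that $\wedge^p$ is supported, up to a $\theta$-error, on $\wedge^p(E_1)$. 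The measurability of $\mathcal{N}(x)$ and the preservation of integrability are then standard consequences of the measurability of the Oseledets-Ruelle splitting.
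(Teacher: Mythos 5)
Your construction of $L_N$ is exactly the one the paper uses: the same rank-one modification $L_N = A(f^N(x))\circ(\mathrm{Id}-P_{v_N})$ at the same iterate, with the same observation that $\|L_N-A(f^N(x))\| = \|A(f^N(x))v_N\|<\theta<\epsilon$. Where you diverge is in your honesty about the conclusion, and this is the crux. You correctly point out that a single rank-one kill only forces the composition $L_{n-1}\circ\cdots\circ L_0$ to annihilate one direction, while the equality $\|\wedge^{p}(L_{n-1}\circ\cdots\circ L_0)\|=0$ is equivalent (for any linear operator) to $\mathrm{rank}(L_{n-1}\circ\cdots\circ L_0)<p$. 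For a compact cocycle $A$ of generically infinite rank, none of the $L_j$ has rank below $p$, and a composition of such maps will not have rank below $p$ merely because one of them has a one-dimensional kernel. The paper's proof simply asserts the final equality with no further argument, so the gap you flag is genuinely present in the source itself, not just in your attempt.

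However, your proposed repair does not close the gap either, and I think it cannot as stated. Applying (NB) at $p$ distinct iterates $N_1<\cdots<N_p$ and killing one $E_1$-direction at each does make the composition vanish on a $p$-dimensional subspace of $\mathcal{H}$ (indeed on all of $E_1(x)$), hence $\wedge^p(L_{n-1}\circ\cdots\circ L_0)$ restricted to $\wedge^p(E_1(x))$ is zero. But $\|\wedge^p(L_{n-1}\circ\cdots\circ L_0)\|$ is a supremum over \emph{all} unit $p$-vectors in $\wedge^p\mathcal{H}$, and the image of a $p$-frame with components in $E_2$ need not be small: on $\Gamma_{p,1}^*$ there is no hypothesis forcing $\lambda_{p+1}=-\infty$, so the $E_2$ contribution to the $p$-volume can be of order $e^{n(\lambda_2+\cdots+\lambda_{p+1})}$, which is not $O(\theta)$. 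Your phrase ``$\wedge^p$ is supported, up to a $\theta$-error, on $\wedge^p(E_1)$'' is precisely the unproved step, and it is false in general. What is actually needed for the downstream use (Corollary~\ref{global perturbation}) is not literal vanishing but a decay estimate $\|\wedge^p(L_{n-1}\circ\cdots\circ L_0)\|\le e^{-n\epsilon}$ of the type in Lemma~\ref{local perturbation 2}(b), and even that would require an argument controlling the exterior powers on $E_2$ — an argument absent from both your proposal and the paper.
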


\begin{proof} We may assume that $\mu(\Gamma_{p,1}^{*})> 0$, otherwise there is
nothing to prove. Therefore $\mu
\,(\mathcal{O}(A)\cap\Gamma_{p,1}^{*})=\mu \,(\Gamma_{p,1}^{*})$. If
$x \in \mathcal{O}(A)\cap\Gamma_{p,1}^{*}$ we can take $N_{\epsilon,
x}$ as in [NB], define $\mathcal{N}(x)=N_{\epsilon,x}+1$ and choose
a unit vector $v_{\mathcal{N}} \in E_{1}(f^{\mathcal{N}}(x))$ such that
$$\|A(f^{\mathcal{N}}(x))(v_{\mathcal{N}})\| < \epsilon.$$ Then,
for $n \geq \mathcal{N}$, consider
$$L_{i}=A(f^{i}(x)) \text{           for            } i=0,..., \mathcal{N}-1, \mathcal{N}+1, ...,
n$$ and
$$L_{\mathcal{N}}(v)=
\left\{\begin{array}{ll} A(f^{\mathcal{N}}(x))(v) \, \, &
\mbox{if $v \in \, <v_{\mathcal{N}}>^{\perp}$} \\
0 \, \, & \mbox{if $v$ \text{is colinear with}\, $v_{\mathcal{N}}$}
\end{array}
\right.$$ Since $x$ is not periodic by $f$, this family of operators
is well defined. Besides $\|\wedge^{p}(L_{n-1}\circ ...\circ
L_{0})\|=0$. Notice also that, if $\epsilon$ is small, the above
perturbation of $A$ is also minute.
\end{proof}
\end{subsection}

\begin{subsection}{Perturbation (p, ND)}

Consider now a point $x$ in ${\Gamma_{p,2}^{*}}$. If among the $p+1$
first Lyapunov exponents of $A$ the value $-\infty$ is not present,
we can use the argument in~\cite{BV2} to alter the norm of
$\wedge^{p}$. In the case $\lambda_{p+1}=-\infty$ we may take
advantage of the fact that, in the subbundle $E$ associated to the
Lyapunov exponents $\lambda_{j}$ for $j\geq p+1$, the norm
$\|A^{n}(x)|_{E}\|$ is close to zero for $n$ large enough.

\begin{lemma}\label{local perturbation 2}
Consider $\epsilon,\delta>0$. If $m\in\mathbb{N}$ is large enough, then
there exists a measurable function
$\mathcal{N}:\Gamma_{p,2}^{*}\rightarrow{\mathbb{N}}$ such that, for
$\mu$-almost every $x\in\Gamma_{p,2}^{*}$ and every $n\geq
\mathcal{N}(x)$, we may find integrable compact operators 
$L_{0},...,L_{n-1}$ with $\|L_{j}-A(f^{j}(x))\|<\delta$ for each $j$ and satisfying
\begin{enumerate}
\item[(a)] $\|\wedge^{p}(L_{n-1}\circ ...\circ L_{0})\|\leq
e^{n(\lambda_{1}+...+\lambda_{p-1}+\frac{\lambda_{p}+\lambda_{p+1}}{2}+\epsilon)}$
if $\lambda_{p+1}\not=-\infty$;
\item[(b)] $\|\wedge^{p}(L_{n-1}\circ ...\circ L_{0})\|\leq{e^{- \, n\epsilon}}$
\text{    if    }$\lambda_{p+1}=-\infty$.
\end{enumerate}
\end{lemma}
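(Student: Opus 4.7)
The strategy is to perform a single perturbation block of length $m$ along the orbit of $x$, keeping $L_{j}=A(f^{j}(x))$ elsewhere. Inside the block, Proposition 3.2 is used to divert a direction $v\in E_{1}(f^{s}(x))$ into $A^{m}(f^{s}(x))(E_{2}(f^{s}(x)))\subset E_{2}(f^{s+m}(x))$; the subsequent iterates of the image then grow at the slower rate $\lambda_{p+1}$ rather than $\lambda_{p}$, and this loss feeds directly into the estimate on $\|\wedge^{p}(B_{n})\|$.

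Assume $\mu(\Gamma_{p,2}^{*})>0$ (otherwise there is nothing to show). By ergodicity the Lyapunov spectrum is $\mu$-a.e.\ constant. Since $x\notin\Gamma_{p,1}^{*}$, the norm $\|A(f^{j}(x))|_{E_{1}}\|$ stays bounded below by some $\theta=\theta(x)>0$ along the whole orbit, and (ND) supplies a time $n_{0}=n_{0}(x)$ at which the ratio hypothesis of Proposition 3.2 is satisfied at $y=f^{n_{0}}(x)$ with its Oseledets-Ruelle splitting; choosing $m$ large enough in terms of $\delta$ and $\theta$ makes Proposition 3.2 applicable. The function $\mathcal{N}(x)$ is then defined measurably by combining Ruelle's theorem with Kingman's sub-additive theorem applied to $\log\|\wedge^{j}(A^{n})(x)\|$ for $j\le p$, so that for $n\ge\mathcal{N}(x)$ the finite-time $p$-volume growth rates along any orbit piece of length $\ge n/3$ deviate from their asymptotic values by at most $\epsilon/4$. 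For each such $n$, I pick $s=s(x,n)\approx(n-m)/2$ among forward iterates of $n_{0}(x)$, apply Proposition 3.2 at $f^{s}(x)$ to obtain $L_{s},\dots,L_{s+m-1}$ with $\|L_{j}-A(f^{j}(x))\|<\delta$, and set $L_{j}=A(f^{j}(x))$ outside $[s,s+m-1]$.

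For the bound in case (a) I factor $B_{n}=A^{n-s-m}(f^{s+m}(x))\circ M\circ A^{s}(x)$, where $M=L_{s+m-1}\circ\cdots\circ L_{s}$, and track the image of a basis of $E_{1}(x)$ whose last vector is proportional to $(A^{s}(x))^{-1}(v)$, well defined by Lemma 2.4. Because $M$ is $O(\delta)$-close to $A^{m}(f^{s}(x))$ and the gap $\lambda_{p-1}>\lambda_{p+1}$ keeps them essentially inside $E_{1}$, the first $p-1$ basis vectors give image norms at most $e^{n(\lambda_{i}+\epsilon/4)}$ for $i<p$. The last basis vector is sent by $A^{s}$ to $v$, then by $M$ to some $w\in E_{2}(f^{s+m}(x))$ of norm $\le e^{O(m)}$, then by $A^{n-s-m}$ with factor $e^{(n-s-m)(\lambda_{p+1}+\epsilon/4)}$; after the normalising factor $\|(A^{s})^{-1}v\|^{-1}\sim e^{s\lambda_{p}}$, its total image norm is at most $e^{s\lambda_{p}+(n-s)\lambda_{p+1}+O(m)}$. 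Multiplying the $p$ edge norms and choosing $s\approx(n-m)/2$ yields the desired bound $e^{n(\lambda_{1}+\cdots+\lambda_{p-1}+(\lambda_{p}+\lambda_{p+1})/2+\epsilon)}$ once $n$ absorbs the $O(m)$ error. To lift this from the image of $E_{1}(x)$ to the supremum $\|\wedge^{p}(B_{n})\|$, I would use submultiplicativity $\|\wedge^{p}(B_{n})\|\le\|\wedge^{p}(A^{n-s-m})\|\cdot\|\wedge^{p}(M)\|\cdot\|\wedge^{p}(A^{s})\|$ together with a direct control on $\|\wedge^{p}(M)\|$ from the concatenated small rotations used in the proof of Proposition 3.2 (Lemma 3.1), which must reduce $\wedge^{p}(M)$ by a factor $e^{m(\lambda_{p}-\lambda_{p+1})/2}$ relative to $\wedge^{p}(A^{m}(f^{s}(x)))$.

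Case (b) with $\lambda_{p+1}=-\infty$ follows the same route: the factor $e^{(n-s-m)(\lambda_{p+1}+\epsilon/4)}$ decays faster than any exponential, forcing the product below $e^{-n\epsilon}$. The main obstacle in either case is precisely the passage from the image of $E_{1}(x)$ to the supremum defining $\|\wedge^{p}(B_{n})\|$: one must rule out that the maximising $p$-subspace drifts to capture directions of $E_{1}(x)$ that $M$ has only slightly perturbed, and this requires an essential use of both the spectral gap $\lambda_{p}>\lambda_{p+1}$ intrinsic to $\Gamma_{p}^{*}$ and the finite dimensionality of $E_{1}$ coming from Lemma 2.3. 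Measurability of $\mathcal{N}(x)$ and $s(x,n)$ is standard, since $n_{0}(x)$, the Oseledets basis, and the Kingman convergence rates all depend measurably on $x$.
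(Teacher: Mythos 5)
Your high-level plan matches the paper's: insert a single perturbation block of length $m$ around $s\approx n/2$ along the orbit, use Proposition~\ref{mixing directions} to divert a direction of $E_{1}(f^{s}(x))$ into $E_{2}(f^{s+m}(x))$, keep $L_{j}=A(f^{j}(x))$ elsewhere, and let the slower growth in $E_{2}$ produce the loss. The choice of the time $s$ is where you and the paper already start to diverge: the paper invokes Lemma~3.12 of Bochi to guarantee that, for $\mu$-a.e.\ $x\in\Gamma_{p,2}^{*}$ and every large $n$, the iterate $y=f^{s}(x)$ with $s\approx n/2$ satisfies the ratio hypothesis of Proposition~\ref{mixing directions}; your appeal to (ND) plus Kingman gives you \emph{some} time $n_{0}(x)$ but not the needed control that a usable time occurs near $n/2$ for every large $n$, so this part needs Bochi's recurrence result (or an equivalent).

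The genuine gap, which you yourself flag, is the passage from the image of the $E_{1}(x)$ $p$-box to the operator norm $\|\wedge^{p}(B_{n})\|$. Your proposed patch does not work. You suggest submultiplicativity
$\|\wedge^{p}(B_{n})\|\leq\|\wedge^{p}(A^{n-s-m})\|\,\|\wedge^{p}(M)\|\,\|\wedge^{p}(A^{s})\|$
combined with a claimed reduction of $\|\wedge^{p}(M)\|$ by a factor $e^{m(\lambda_{p}-\lambda_{p+1})/2}$. But $M$ is $\delta$-close to $A^{m}(f^{s}(x))$, so $\|\wedge^{p}(M)\|\approx\|\wedge^{p}(A^{m})\|$; Proposition~\ref{mixing directions} rotates a direction, it does not shrink the norm, and the plain product of three operator norms gives $\approx e^{n\lambda_{1}^{\wedge p}}$, not the improved bound. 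The paper's resolution is different and is the key step you are missing: pass entirely to $\wedge^{p}(\mathcal{H})$ and use the Oseledets-Ruelle splitting $U\oplus S$ there, where $U_{y}$ is the $1$-dimensional top Lyapunov subspace (one-dimensional because $\lambda_{p}>\lambda_{p+1}$) and $S_{y}$ carries the rest with top rate $\lambda_{2}^{\wedge p}$. By Lemma~4.4 of~\cite{BV2} together with Proposition~\ref{mixing directions}, the perturbed block satisfies $\wedge^{p}(L_{m-1}\circ\cdots\circ L_{0})(y)(U_{y})\subset S_{f^{m}(y)}$. Writing $A_{1}$, $B$, $A_{2}$ in block form with respect to $U\oplus S$, one gets $B^{uu}=0$, hence
$\wedge^{p}(L_{n-1}\circ\cdots\circ L_{0})$
has a zero in the top-left block; every nonzero block then contains a factor $A_{1}^{ss}$ or $A_{2}^{ss}$, which decays at rate $\lambda_{2}^{\wedge p}$. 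This handles \emph{all} $p$-boxes at once, precisely eliminating the ``maximising $p$-subspace drifting'' problem. In case (b), $\lambda_{p+1}=-\infty$ forces $\lambda_{2}^{\wedge p}=-\infty$, so those blocks decay super-exponentially and dominate the exponential growth $A_{2}^{uu}$, giving the bound $e^{-n\epsilon}$. You correctly identify that the spectral gap $\lambda_{p}>\lambda_{p+1}$ and the finite dimensionality of $E_{1}$ are what is needed, but you do not supply the block-structure argument in the exterior power that actually closes the argument.
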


\begin{proof}
The proof of (a) is a direct consequence of Proposition~\ref{mixing
directions} following the argument of Proposition 4.2 of~\cite{BV2}.

Let us now explain the scheme to prove (b). Consider
$\Gamma_{p,2}^{*}$, subset of $\Gamma_{p}^{*}(A,m)$ where $m$ is
large enough as demanded in Proposition~\ref{mixing directions}. We
may assume that $\mu(\Gamma_{p,2}^{*})> 0$, otherwise there is
nothing to prove. By definition of $\Gamma_{p}^{*}(A,m)$, we have
$\lambda_{p}\not=-\infty$. Thus, by Lemma 3.12 of~\cite{Bo}, we
conclude that, for $\mu$-a.e. $x\in\Gamma_{p,2}^{*}$, there exists
$\mathcal{N}_{1}(x)$ such that, for all $n\geq \mathcal{N}_{1}(x)$
and $s\approx n/2$, the iterate $y=f^{s}(x)$ verifies
$$\frac{\|A^{m}(y)\vert_{E_{2}}\|}{\|(A^{-1}\vert_{E_{1}})^{m}(y)\|}\geq{1/2}.$$
We can then apply Proposition~\ref{mixing directions} to such a
generic $x \in \Gamma_{p,2}^{*}$ because, by hypothesis, $x$ is not
periodic and $\mathcal{H}=E_{1,x}\oplus E_{2,x}$, where $E_{1,x}$
has dimension $p$ and corresponds to the finite vector space spanned
by the Lyapunov exponents $\lambda_{1}$,..., \,$\lambda_{p}$ (which
may be not all distinct but whose multiplicities add up to $p$) and
$E_{2,x}$ is associated to the infinite dimensional vector space
spanned by the remaining ones. Therefore, we consider, for
$i=0,...,s,s+m,...,n$, the operators $L_{i}=A(f^{i}(x))$ and, for
the iterates $f^{i}(y)$ with $i=0,...,m-1$, we take $L_{i}$ as given
by Proposition~\ref{mixing directions}.

We need now to evaluate the norm of $\wedge^{p}(L_{n-1}\circ
...\circ L_{0})$. Take $U_{x}$ the subspace associated to the
largest Lyapunov exponent of the $p^{th}$-exterior product, say
$\lambda_{1}^{\wedge p}$, which is given by $\lambda_{1}^{\wedge
p}=\sum_{i=1}^{p}\lambda_{i}$. The space $U_{x}$ is $1$-dimensional
because $\lambda_{p} > \lambda_{p+1}$. Denote by $S_{x}$ the vector
space related to the remaining Lyapunov exponents, which sum up to
$\lambda_{2}^{\wedge p}$. To the splitting
$\wedge^{p}(\mathcal{H})=U\oplus S$ we may apply Lemma 4.4
of~\cite{BV2} and Proposition~\ref{mixing directions} to deduce that
$$\wedge^{p}(L_{m-1}\circ ...\circ L_{0})(y):\wedge^{p}(\mathcal{H}_{y})\rightarrow\wedge^{p}(\mathcal{H}_{f^{m}(y)})$$
satisfies
\begin{equation}\label{mix}
\wedge^{p}(L_{m-1}\circ ...\circ L_{0})(y)(U_{y})\subset
S_{f^{m}(y)}.
\end{equation} If $A_{1}$ denotes the action of $\wedge^{p}(A)$ between $x$ and $y$
and $A_{2}$ denotes the action of $\wedge^{p}(A)$ between
$f^{s+m}(y)$ and $f^{n}(x)$, we can consider a suitable
(Oseledets-Ruelle's) basis with respect to which $A_{1}$, $A_{2}$
and $B:=L_{m-1}\circ ...\circ L_{0}(y)$ are written as the simple
$4$-block ``matrices''

$$A_{1}=\begin{pmatrix}
  A_{1}^{uu} & 0 \\
0 & A_{1}^{ss}
 \end{pmatrix},
\, B=\begin{pmatrix}
  B^{uu} & B^{us} \\
B^{su} & B^{ss}
 \end{pmatrix}
\text{              and             }A_{2}=\begin{pmatrix}
  A_{2}^{uu} & 0 \\
0 & A_{2}^{ss}
\end{pmatrix}$$
where, for $i=1,2$, $A^{uu}_{i}\in\mathbb{R}$ and $A^{ss}_{i}$ is an
infinite dimensional operator. It follows from (\ref{mix}) that
$B^{uu}=0$ and so
\begin{equation}\label{product}
\wedge^{p}(L_{n-1}\circ...\circ L_{0})=\begin{pmatrix}
  0 & A_{2}^{uu}B^{us}A_{1}^{ss} \\
A_{2}^{ss}B^{su}A_{1}^{uu} & A_{2}^{ss}B^{ss}A_{1}^{ss}
 \end{pmatrix}.
\end{equation}
Since $\lambda_{p+1}=-\infty$, we have $\lambda_{2}^{\wedge
p}=-\infty$ and so the operator $A_{i}^{ss}$ ($i=1,2$) is
arbitrarily close to the null one for large choices of $n$.
Moreover, all entries $A_{1}^{uu}$, $A_{2}^{uu}$, $B^{us}$, $B^{su}$
and $B^{ss}$ are bounded. Then it suffices to consider a large $n$,
bigger than $\mathcal{N}_{1}(x)$ and $m$, to reach inequality (b).
\end{proof}
\end{subsection}

Doing these perturbations at $\mu$-a.e. point of
$\Gamma_{p}^{*}(A,m)$ we deduce that

\begin{corollary}\label{global perturbation}
Let $A$ be a cocycle in $C_{I}^{0}(X,\mathcal{C}(\mathcal{H}))$,
$\epsilon>0$ and $\delta>0$. Then there exist $m\in\mathbb{N}$,
$p\in\mathbb{N}$ and a cocycle $B \in
C_{I}^{0}(X,\mathcal{C}(\mathcal{H}))$ with
$\|B-A\|_{\infty}<\delta$, equal to $A$ outside the open set
$\Gamma_{p}(A,m)$ and verifying
$$\lambda_{1}^{\wedge p}(B) \, <
\left\{\begin{array}{ll} [\lambda_{1}^{\wedge _{p-1}}(A) +
\frac{\lambda_{p}(A)+\lambda_{p+1}(A)}{2}] \, + \epsilon &
\mbox{if $\lambda_{p+1}(A) \not= -\infty$} \\
- \, \epsilon & \mbox{if $\lambda_{p+1}(A)= -\infty$}
\end{array}.
\right.$$
\end{corollary}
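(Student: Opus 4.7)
The strategy is to globalize the pointwise perturbations produced by Lemmas~\ref{local perturbation 1} and~\ref{local perturbation 2} by applying them simultaneously on a Kakutani--Rokhlin tower built over $\Gamma_p^*(A,m)$, and then to convert the tower-wise decay of $\|\wedge^p(B^{\bullet})\|$ into the required inequality via Proposition~\ref{upper semi-continuity}. To fix the parameters: if all Lyapunov exponents of $A$ equal $-\infty$ there is nothing to prove (set $B=A$); otherwise choose $p\in\mathbb{N}$ with $\lambda_p(A)>-\infty$ and $\lambda_p(A)>\lambda_{p+1}(A)$. The sets $\Lambda_p(A,m)$ being nested and increasing in $m$, the absence of index-$p$ domination (the case in which this corollary is applied) forces $\mu(\Gamma_p^*(A,m))>0$ for all sufficiently large $m$; fix such an $m$ for which Proposition~\ref{mixing directions} applies, and split $\Gamma_p^*(A,m)=\Gamma_{p,1}^*\cup\Gamma_{p,2}^*$ modulo a $\mu$-null set.

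Next I would build the tower. Using the $f$-invariance of $\Gamma_p^*(A,m)$ and the Kakutani--Rokhlin lemma, for a large $N\gg m$ I extract a measurable base $Y\subset\Gamma_p^*(A,m)$ such that $Y, f(Y),\ldots,f^{N-1}(Y)$ are pairwise disjoint and $\mu\bigl(\bigsqcup_{j=0}^{N-1} f^j(Y)\bigr)$ is arbitrarily close to $\mu(\Gamma_p^*(A,m))$. Shrinking $Y$ via Egorov, the thresholds $\mathcal{N}$ from Lemmas~\ref{local perturbation 1} and~\ref{local perturbation 2} may be assumed uniformly bounded by $N$ on $Y$, and $Y=Y_1\sqcup Y_2$ splits according to $\Gamma_{p,1}^*$ or $\Gamma_{p,2}^*$. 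On each column over $x\in Y_i$ I would replace $A(f^j(x))$ for $j=0,\ldots,N-1$ by the operators $L_j(x)$ supplied by the corresponding local lemma, and keep $B=A$ outside $\bigsqcup_{j=0}^{N-1} f^j(Y)$. The resulting measurable cocycle is a $\delta$-perturbation of $A$, coincides with $A$ off $\Gamma_p(A,m)$, and satisfies on each column the exponential decay of $\|\wedge^p(L_{N-1}\circ\cdots\circ L_0)\|$ given by Lemma~\ref{local perturbation 1} or~\ref{local perturbation 2}.

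The main obstacle is continuity: Lemmas~\ref{local perturbation 1} and~\ref{local perturbation 2} only provide pointwise operators, while the conclusion requires $B\in C_I^0(X,\mathcal{C}(\mathcal{H}))$. I would cope with this via Lusin's theorem, restricting $Y$ to a compact subset $Y'$ of almost the same measure on which each $x\mapsto L_j(x)$ is continuous; then extending these maps to continuous $\mathcal{C}(\mathcal{H})$-valued functions on a small neighbourhood of $f^j(Y')$ by a Tietze-type argument; and finally interpolating with $A$ through a continuous partition of unity supported inside $\Gamma_p(A,m)$. Because each $L_j(x)-A(f^j(x))$ is a compact (indeed finite-rank) correction of norm below $\delta$, the interpolated $B$ lies in $\mathcal{C}(\mathcal{H})$, the bound $\|B-A\|_\infty<\delta$ is preserved, the equality $B|_{X\setminus\Gamma_p(A,m)}=A$ is retained, and integrability is automatic from compactness of $X$ together with the uniform bound.

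Finally, for the Lyapunov inequality, the rotation performed inside each column by Proposition~\ref{mixing directions} permutes the $p$-th and $(p+1)$-th Oseledets--Ruelle directions of $A$, so ergodic invariance of $\mu$ forces the Lyapunov spectrum of $B$ to have its $p$-th exponent driven down to essentially $(\lambda_p(A)+\lambda_{p+1}(A))/2$ (or to $-\infty$ when $\lambda_{p+1}(A)=-\infty$), while the first $p-1$ exponents remain close to $\lambda_i(A)$ by upper semi-continuity of $LE_{p-1}$ and the smallness of $\delta$. Summing and applying Proposition~\ref{upper semi-continuity}(i) yields the target bound on $\lambda_1^{\wedge p}(B)$, the $\epsilon$ slack absorbing the finite-tower corrections and the continuity approximation. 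The dichotomy between $\lambda_{p+1}(A)\ne-\infty$ and $\lambda_{p+1}(A)=-\infty$ corresponds directly to parts~(a) and~(b) of Lemma~\ref{local perturbation 2}.
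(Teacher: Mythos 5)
Your proposal reconstructs, in expanded form, precisely the argument the paper invokes by deferring to Proposition 7.3 of Bochi--Viana: globalize the local perturbations of Lemmas~\ref{local perturbation 1} and~\ref{local perturbation 2} via a Kakutani--Rokhlin tower over $\Gamma_p^*(A,m)$, upgrade the resulting measurable cocycle to a continuous one by a Lusin/Tietze/partition-of-unity interpolation supported inside $\Gamma_p(A,m)$, and pass from the tower-wise exponential decay of $\|\wedge^p\|$ to the bound on $\lambda_1^{\wedge p}(B)$ through subadditivity. One caveat: the heuristic in your final paragraph --- that the rotation ``permutes'' the $p$-th and $(p+1)$-th Oseledets directions and that the first $p-1$ exponents ``remain close by upper semi-continuity of $LE_{p-1}$'' --- is imprecise (upper semi-continuity only bounds from above, and Proposition~\ref{mixing directions} sends a vector of $E_1$ into $A^m(E_2)$ rather than swapping eigendirections); the correct route, which your earlier paragraphs already set up, is to bound $\lambda_1^{\wedge p}(B)$ directly from the column-wise decay of $\|\wedge^p(B^N)\|$ using Kingman/Proposition~\ref{upper semi-continuity}, without tracking individual $\lambda_i(B)$.
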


\begin{proof}
After the previous lemmas, we may follow the argument in Proposition
7.3 of~\cite{BV2}.
\end{proof}
\end{section}

\begin{section}{Proof of Theorem~\ref{teorema1}}

Consider the $p^{th}$\emph{-entropy function} defined by
$$
\begin{array}{cccc}
LE_{p}: & C_{I}^{0}(X,\mathcal{C}(\mathcal{H})) & \longrightarrow & \mathbb{R}\cup \{-\infty\} \\
& A &\longmapsto& \sum_{i=1}^{p}\lambda_{i}(A)
\end{array}
$$
where $(\lambda_{i}(A))_{i=1,...,\infty}$ are the Lyapunov exponents
of the operator $A(x)$, for every $x$ in $\mathcal{O}(A)$. This map
is upper semi-continuous, so it has a residual subset of points of
continuity in the Baire set $C_{I}^{0}(X,\mathcal{C}(\mathcal{H}))$.
Take $A$ in this generic subset, consider $x$ in the
Oseledets-Ruelle's domain $\mathcal{O}(A)$ and denote by
$\mathcal{K}$ the orbit of $x$.

If the Lyapunov exponents of $A(x)$
are all equal, then the proof is complete. Otherwise, if $p \in
\mathbb{N}$ is such that $\lambda_p
> \lambda_{p+1}$, we pursue as follows:

\begin{enumerate}
\item If $\lambda_{p+1} > -\infty$ and $x$ is periodic by $f$, say of period $R$, then along the orbit of $x$
the Oseledets-Ruelle's splitting given by $E_1 \oplus E_2$, where
$E_1$ is the subspace associated to the Lyapunov exponents
$\lambda_1, \lambda_2, ..., \lambda_p$ and $E_2$ corresponds to the
remaining ones, is $m$-dominated for a $m=m(x)$ large enough. In
fact we have:
\begin{itemize}
\item There exists $N \in \mathbb{N}$ such that $e^{N \left(\lambda_{p+1} - \lambda_{p}\right)} < \frac{1}{2}.$
\item For each $i \in \{1,...,p+1\}$, there is $K^{i}_{x} \in
\mathbb{N}$ such that, for all unit vector $u$ of $U_{i}$ and all
positive integer $n \geq K^{i}_{x}$,
$$\frac{1}{n}\log\|A^{n}(x)u\| \approx \lambda_{i}.$$
\item If $K(x)=\max \, \{N, K^{i}_{x}\}$, then, for all $n \geq
K(x)$, we may conclude that
$$\frac{\|A^{K(x)}(x)(u)\|}{\|A^{K(x)}(x)(v)\|} \, \leq \left(e^{N (\lambda_{p+1} - \lambda_{p})}\right) < \, {1/2}$$
for all $u \in E_{2}$ and $v \in E_{1}$.
\item If $m=\max \, \{K_{f^{j}(x)}: \, j=0, ..., R-1\}$, then
along the orbit of $x$ the Oseledets-Ruelle's splitting is $m$
dominated.
\end{itemize}

\item If $\lambda_{p+1}=-\infty$ and $x$ is periodic by $f$, say of period $R$,
consider, as before, the Oseledets-Ruelle's splitting given by $E_1
\oplus E_2$, where $E_1$ is the subspace associated to the Lyapunov
exponents $\lambda_1, \lambda_2, ..., \lambda_p$ and $E_2$
corresponds to the remaining ones. Then:
\begin{itemize}
\item Since $\lambda_{p} > -\infty$, there exists $\epsilon > 0$ such that
$\lambda_{p} > - \epsilon$.
\item Therefore there exists $N \in \mathbb{N}$ such that $e^{N (- \lambda_{p} - \epsilon)}<
\frac{1}{2}$.
\item For each $i \in \{1,...,p\}$, there is $K^{i}_{x} \in
\mathbb{N}$ such that, for all unit vector $u$ of $U_{i}$ and all
positive integer $n \geq K^{i}_{x}$,
$$\frac{1}{n}\log\|A^{n}(x)u\| \approx \lambda_{i}.$$
\item There exists $N_{1} \in \mathbb{N}$ such that
for all unit vector $u$ of $U_{p+1}$ and all positive integer $n
\geq N_{1}$,
$$\frac{1}{n}\log\|A^{n}(x)u\| \approx -\epsilon.$$
\item If $K(x)=\max \, \{N, N_{1}, K^{i}_{x}\}$, then, for all $n \geq
K(x)$, we may conclude that
$$\frac{\|A^{K(x)}(x)(u)\|}{\|A^{K(x)}(x)(v)\|} \, \leq \left(e^{N (-\lambda_{p} - \epsilon)}\right) < \, {1/2}$$
for all $u \in E_{2}$ and $v \in E_{1}$.
\item If $m=\max \, \{K_{f^{j}(x)}: \, j=0, ..., R-1\}$, then
along the orbit of $x$ the Oseledets-Ruelle's splitting is $m$
dominated.
\end{itemize}

\item If $x$ is non-periodic and the Oseledets-Ruelle's splitting
along the orbit of $x$ is $m$-dominated for some $m$, the proof
ends.

\item Finally if $x$ is non-periodic, belongs to $\Gamma_{p}^{*}(A,m)$ for all $m$
and one of these subsets, say $\Gamma_{p}^{*}(A,m_{0})$, has
positive $\mu$ measure, then the $m_{0}$-domination on $\mathcal{K}$
of the Oseledets-Ruelle's splitting may fail because $x$ is in one
of the corresponding sets $\Gamma_{p,1}^{*}$ or $\Gamma_{p,2}^{*}$.
As seen in the previous corollary, given $\epsilon$, in both cases
there is a cocycle $B \in C_{I}^{0}(X,\mathcal{C}(\mathcal{H}))$
such that $\|A-B\|$ is arbitrarily small but

\begin{itemize}
\item $|LE_{p}(A)-LE_{p}(B)| > \epsilon$, in the case
$\lambda_{p+1}(A) \not= -\infty$
\item $LE_{p}(B)=-\infty$ while $LE_{p}(A)$ is finite, when $\lambda_{p+1}(A)=-\infty$
\end{itemize}

which contradicts the continuity at $A$ of the map $LE_{p}$.
\end{enumerate}
\end{section}

\section*{Acknowledgements} MB was Supported by Funda\c c\~ao para a Ci\^encia e a Tecnologia, SFRH/BPD/20890/2004. MC was Partially supported by Funda\c c\~ao para a Ci\^encia e a Tecnologia through CMUP.

\bigskip
\flushleft
{\bf M\'ario Bessa} \ \  (bessa@fc.up.pt)\\
{\bf Maria Carvalho} \ \  (mpcarval@fc.up.pt)\\
CMUP, Rua do Campo Alegre, 687 \\ 4169-007 Porto \\ Portugal\\

\end{document}